\numberwithin{equation}{section}
\newcommand{\dist}{\operatorname{dist}}
\newcommand{\de}{\em}
\newcommand{\diam}{\operatorname{diam}}
 \newcommand{\eps} {\varepsilon}
\def \T {{\mathbb T}}
\newtheorem{theorem}{Theorem}[section]
\newtheorem{proposition}[theorem]{Proposition}
\newtheorem{lemma}[theorem]{Lemma}
\newtheorem{conjecture}[theorem]{Conjecture}
\theoremstyle{remark}
\begin{document}

\renewcommand{\subjclassname}{\textup{2000} Mathematics Subject Classification}


\setcounter{tocdepth}{2}

\keywords{}

\subjclass{}

\renewcommand{\subjclassname}{\textup{2000} Mathematics Subject Classification}

\date{\today}

\title{Center-unstable foliations do not have compact leaves}

\begin{abstract}
For a partially hyperbolic diffeomorphism on a 3-manifold,  we show that any invariant foliation tangent to the center-unstable (or center-stable) bundle has no compact leaves.
\end{abstract}

\thanks{}

\author{F. Rodriguez Hertz}
\address{Department of Mathematics\\ The Pennsylvania State University,
University Park,
State College, PA 16802 .}
\email{hertz@math.psu.edu}

\author{J. Rodriguez Hertz}
\address{IMERL-Facultad de Ingenier\'\i a\\ Universidad de la
Rep\'ublica\\ CC 30 Montevideo, Uruguay.}
\email{jana@fing.edu.uy}\urladdr{http://www.fing.edu.uy/$\sim$jana}

\author{R. Ures}
\address{IMERL-Facultad de Ingenier\'\i a\\ Universidad de la
Rep\'ublica\\ CC 30 Montevideo, Uruguay.} \email{ures@fing.edu.uy}
\urladdr{http://www.fing.edu.uy/$\sim$ures}

\maketitle
\section{Introduction}

This article deals with the integrability of invariant distributions arising in certain dynamical systems. 
The integrability of tangent distributions of $k-$planes is an important problem
that has been studied for more than a century. Under certain hypotheses, there have been 
quite satisfactory answers to the unique integrability problem. For instance, Picard's theorem in the one-dimensional case, and Frobenius' theory in higher dimensions. However, both results involve regularity of the distributions. \par
The problem is that, in general, distributions arising from a dynamical system are only H\"older-continuous, even if the system is smooth. Therefore, other elements of analysis are required in order to establish necessary or sufficient conditions for integrability. \par

Consider, for instance, the case of hyperbolic systems. Namely, diffeomorphims $f:M\to M$ for which there exists an invariant splitting of the tangent bundle into two invariant distributions, called {\em stable} and {\em unstable}: $TM=E^{s}\oplus E^{u}$ such that, for some Riemannian metric $\|.\|$ and all unit vectors $v_{s}\in E^{s}$ and $v_{u}\in E^{u}$ we have:
$$\|Df(x)v_{s}\|<1<\|Df(x)v_{u}\|.$$
In this case, both distributions, stable and unstable, are uniquely integrable. This can be achieved by applying essentially either one of the following two methods: Hadamard's method \cite{hadamard}, which consists in seeing each invariant integral manifold as a fixed point of a contracting operator acting in an appropriate space of functions, or Perron's method \cite{perron}, which consists in applying the Implicit Function Theorem to an operator acting in an appropriate function space.\par
Here we shall focus on the integrability of some distributions arising from a more general kind of systems, the {\em partially hyperbolic} ones. These systems involve not only a contracting and an expanding bundle as the ones mentioned above, but also a {\em center bundle}, which has an intermediate behavior. Namely; a diffeomorphism $f:M\to M$ is {\em partially hyperbolic} if the tangent bundle of $M$ admits an invariant splitting into 3 bundles, called respectively, {\em stable}, {\em center} and {\em unstable} bundles, $TM=E^{s}\oplus E^{c}\oplus E^{u}$, such that, for some Riemannian metric $\|.\|$, and all unit vectors $v_{s}\in E^{s}$, $v_{c}\in E^{c}$ and $v_{u}\in E^{u}$ we have 
$$\|Df(x)v_{s}\|<\|Df(x)v_{c}\|<\|Df(x)v_{u}\|\quad\text{and}\quad \|Df(x)v_{s}\|<1<\|Df(x)v_{u}\|$$
Hadamard's and Perron's methods can be used to show that also in this setting, the stable and unstable bundles are uniquely integrable, see for instance \cite{bp,hps}.
However, it is a more delicate matter to determine wether $E^{c}$ or even $E^{cs}=E^{s}\oplus E^{c}$ or $E^{cu}=E^{c}\oplus E^{u}$ are integrable. \par
We will say that $f$ is {\em $cs$-dynamically coherent} if there exists an $f$-invariant foliation tangent to $E^{cs}$. The $cu$-dynamical coherence is defined analogously. A diffeomorphism $f$ is said to be {\em dynamically coherent} if it is both $cs$- and $cu$-dynamically coherent. This implies in particular that there exists an $f$-invariant foliation tangent to $E^{c}$.\par
Partially hyperbolic diffeomorphisms are not dynamically coherent in general. Indeed, as it was observed by A. Wilkinson in \cite{wilk}, a non-dynamically coherent example is given by an algebraic Anosov diffeomorphim in a six dimensional manifold, that was presented in the well-known survey by S. Smale \cite{sm} and is attributed to A. Borel. In this example, the sub-bundles are $C^{\infty}$, the center bundle is 4-dimensional and it cannot be integrable since the Frobenius condition is not satisfied. \newline\par

Is the lack of Frobenius condition the only reason for non-integrability of the center bundle? What about the case of one-dimensional $E^{c}$, where Frobenius condition is always trivially satisfied? The question of whether a partially hyperbolic diffeomorphism existed for a one-dimensional non-integrable center bundle remained open since the 70's. In \cite{HHU.nondinco} the authors answered the question negatively. They constructed examples of non-dynamically coherent
partially hyperbolic diffeomorphisms on  $\mathbb{T}^3$. In \cite{HHU.nondinco}, it is also given (using the same methods) an example of a dynamically coherent diffeomorphism with non-locally uniquely integrable center foliation. The existence of such examples
contrasts with the result obtained by M. Brin, D. Burago and S.
Ivanov \cite{brin_burago_ivanov2} proving that diffeomorphisms on
$\T^3$ satisfying a more restrictive definition of partial hyperbolicity (the {\em absolute partial hyperbolicity}) are dynamically coherent. \newline\par

A natural question is then: are there necessary or sufficient conditions for dynamical coherence? In this paper we provide some necessary conditions:

\begin{theorem}\label{uniqueint} Let $M$ be a closed 3-dimensional manifold and $f:M\rightarrow M$ be a $cu$-dynamically coherent partially hyperbolic diffeomorphism. Then, the center unstable foliation has no compact leaves.
\end{theorem}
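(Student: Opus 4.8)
The plan is to argue by contradiction: assume the center-unstable foliation $\W^{cu}$ has a compact leaf $S$. Since $\dim M = 3$ and the splitting $TM = E^{s}\oplus E^{c}\oplus E^{u}$ is nontrivial, each bundle is one-dimensional, so $S$ is a closed surface and the strong unstable foliation $\W^{u}$ (which is uniquely integrable and subfoliates $\W^{cu}$) restricts to a one-dimensional foliation of $S$, i.e. a nonsingular line field $E^{u}|_{S}$. A closed surface carrying a nonsingular line field has zero Euler characteristic, so $S$ is a torus or a Klein bottle; passing to a double cover and replacing $f$ by a suitable iterate, I would reduce to the case $S = \T^{2}$.

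Next I would reduce to an \emph{invariant} leaf. The iterates $f^{n}(S)$ are again compact leaves, and the union of compact leaves of a codimension-one foliation is closed; restricting $f$ to this compact invariant set and using that each compact leaf attracts a neighborhood along the transverse, contracting strong stable foliation $\W^{s}$, I would produce a periodic leaf and, after replacing $f$ by an iterate, assume $f(S)=S$. On the invariant torus $S$ the foliation $\W^{u}|_{S}$ is preserved and uniformly expanded by $f$. I would then rule out closed leaves of $\W^{u}|_{S}$: a closed unstable leaf would be a simple closed curve $\gamma\subset S$ whose backward iterates $f^{-n}(\gamma)$ are closed unstable leaves of length tending to $0$; such a curve cannot be essential (essential curves have length bounded below for any fixed metric on the compact surface $S$) and cannot be null-homotopic (a null-homotopic closed leaf bounds a disk, forcing a singularity of the nonsingular foliation $\W^{u}|_{S}$ by the index theorem). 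Hence $\W^{u}|_{S}$ has no closed leaf, and by the Poincar\'e--Bendixson theory on $\T^{2}$ every leaf is dense with a well-defined irrational asymptotic direction.

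The heart of the matter, and the step I expect to be the main obstacle, is to derive a contradiction from the existence of such an invariant torus. Here I would exploit that $S$ carries two transverse $f$-invariant one-dimensional foliations: the expanded $\W^{u}|_{S}$ and the center foliation $\cC$ obtained by integrating the line field $E^{c}|_{S}$ (one-dimensional line fields on a surface are always integrable, and $E^{c}$ is $Df$-invariant, so $f(\cC)=\cC$). Consequently the action $f_{*}$ on $H_{1}(S)=\Z^{2}$ is hyperbolic, with expanding eigendirection equal to the irrational asymptotic direction of $\W^{u}|_{S}$; that is, $S$ is an \emph{Anosov torus}. I would then bring in the ambient structure: the strong stable foliation $\W^{s}$ is one-dimensional and everywhere transverse to $\W^{cu}$, so $\W^{cu}$ is taut; by Novikov's theorem it has no Reeb component and the torus leaf $S$ is incompressible, whence $\pi_{1}(S)\hookrightarrow\pi_{1}(M)$ and the hyperbolic eigenvalues of $f_{*}|_{H_{1}(S)}$ persist in $H_{1}(M;\R)$. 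To close the argument I would analyze the holonomy of $\W^{cu}$ along $S$ on a stable transversal together with the contraction of $\W^{s}$, using naturality $f\circ h_{\gamma}=h_{f(\gamma)}\circ f$ to tie the germ of holonomy to the hyperbolic homological action; I expect this to be incompatible either with Reeb stability (in the trivial-holonomy case this produces a forbidden continuum of permuted Anosov tori accumulating on $S$) or with the incompressibility via the classification of three-manifolds carrying an invariant Anosov torus. Turning the homological hyperbolicity and the transverse contraction into an outright contradiction is precisely the crux, and the step requiring the most care.
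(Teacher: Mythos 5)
Your reduction to a fixed $cu$-torus $T$ with hyperbolic induced action on $H_1(T)$ is sound and matches the paper's Section 2 (Propositions \ref{cu.tori.periodic} and \ref{cu.tori.anosov}, Lemma \ref{hyperbolic}). But two things go wrong afterwards. First, the claim that ``$\mathcal{W}^{s}$ is everywhere transverse to $\mathcal{W}^{cu}$, so $\mathcal{W}^{cu}$ is taut'' is false: transversality to a one-dimensional foliation by \emph{lines} does not produce a closed transversal through every leaf, and indeed Theorem \ref{theorem.taut} of the paper is precisely about foliations transverse to $E^{s}$ that are \emph{not} taut (the presence of a torus leaf is exactly the situation where tautness can fail). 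Second, and fatally, the final contradiction is never derived, and the routes you sketch cannot produce one. The existence of an invariant $cu$-torus with hyperbolic induced action is \emph{not} contradictory: the paper points out that there are dynamically coherent partially hyperbolic diffeomorphisms of $\T^{3}$ possessing an invariant $cu$-torus, and Theorem \ref{invariant.tori} merely constrains $M$ to a short list of manifolds on which all of this genuinely occurs. So no argument based only on ``$S$ is an invariant Anosov torus'' (classification of manifolds with Anosov tori, incompressibility, persistence of eigenvalues in $H_1(M)$) can close the proof. The contradiction must use that $T$ is a leaf of a \emph{global invariant foliation} tangent to $E^{cu}$, which is exactly the ingredient your sketch never exploits.

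For comparison, here is where the paper actually gets its contradiction. After cutting $M$ along $T$ to get $\T^{2}\times[0,1]$ with $f$ inducing a map isotopic to $A\times id$, Franks' theorem gives a semiconjugacy $h$ onto the linear model $A$ with uniformly bounded fibers. One shows $h^{-1}(h(p))\cap T$ is a small center arc for a suitable periodic $p$, and that every leaf of $\mathcal{F}^{cu}$ meeting $W^{cs}_{loc}(p)$ close to $T$ contributes a point of $h^{-1}(h(p))$ on a distinct center arc. Taking $N$ such points, the boundedness of the fibers under all backward iterates plus a pigeonhole argument yields two of them, $x_i$ and $x_j$, with $\dist(\bar f^{n_k}(x_i),\bar f^{n_k}(x_j))<\varepsilon$ along $n_k\to-\infty$. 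Joining them by a center arc inside a single $cu$-leaf followed by a stable (hence transverse) arc, the backward iterates give an arbitrarily long transversal with endpoints $\varepsilon$-close whose ``gap'' lies inside one leaf; standard foliation arguments then close it up to a nullhomotopic closed transversal, forcing a Reeb component of $\mathcal{F}^{cu}$, which is impossible for a foliation transverse to $E^{s}$. This interplay between the semiconjugacy fibers, the transverse stable direction, and Novikov-type arguments is the crux you identified as missing, and it is not recoverable from the homological data alone.
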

Let us note that any compact leaf tangent to $E^{cu}$ must be a torus, by Poincar\'e-Hopf, due to the fact that is is foliated by lines. We shall call $cu$-torus any 2-torus tangent to $E^{cu}$.\par
As a matter of fact, the non-dynamically coherent example \cite{HHU.nondinco} was inspired in this result. Since any center-unstable foliation cannot have a torus leaf, we constructed a plane field that was uniquely integrable outside a $cu$-torus in such a way that if a foliation existed, it should contain the torus as a leaf. The fact that this is not possible provided the first example of a non dynamically coherent partially hyperbolic diffeomorphism with one-dimensional center bundle. \par
Observe that Theorem \ref{uniqueint} does not prevent the existence of tori, even invariant, tangent to $E^{cu}$. Theorem \ref{uniqueint} asserts the impossibility of the existence of such tori {\em as part of an invariant foliation} tangent to $E^{cu}$. Indeed, in \cite{HHU.nondinco} we also provide an example of a dynamically coherent partially hyperbolic diffeomorphism of $\T^{3}$ with an invariant $cu$-torus. \par
Theorem \ref{uniqueint} then states that if there is a center-unstable invariant foliation, no leaf can be a torus. We conjecture the converse is also true:

\begin{conjecture}\label{conj}
If a  partially hyperbolic diffeomorphism $f:M^3\rightarrow M^3$ is not dynamically coherent, then it admits either a $cu$- or an $sc$-torus.
\end{conjecture}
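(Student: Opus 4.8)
Since $E^{cu}(f^{-1})=E^{cs}(f)$, the map $f$ is $cs$-dynamically coherent exactly when $f^{-1}$ is $cu$-dynamically coherent, and an $sc$-torus for $f$ is the same surface as a $cu$-torus for $f^{-1}$. Hence it suffices to prove that \emph{a partially hyperbolic diffeomorphism $g$ of a closed $3$-manifold which is not $cu$-dynamically coherent must admit a $cu$-torus}, and then to apply this to $g=f$ when $f$ fails to be $cu$-coherent, and to $g=f^{-1}$ when $f$ fails to be $cs$-coherent. Passing to a finite cover and a power if necessary, I would assume $E^s,E^c,E^u$ are oriented and preserved by $Dg$; a compact leaf found in the cover projects to a torus below.

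The starting point is the existence of a $g$-invariant \emph{branching foliation} $\mathcal W^{cu}$ tangent to $E^{cu}$ in the sense of Burago--Ivanov: a family of complete immersed surfaces, at least one through each point, pairwise non-crossing and invariant, but allowed to merge and split. If $\mathcal W^{cu}$ exhibits no genuine branching it is an honest $g$-invariant foliation tangent to $E^{cu}$, contradicting non-coherence. So there exist distinct complete leaves $L_1\neq L_2$ with $L_1\cap L_2\neq\emptyset$, and the whole plan rests on converting this branching into a compact leaf, which, being tangent to $E^{cu}$ and foliated by the unstable lines $W^u$, has vanishing Euler characteristic and is therefore a $cu$-torus, exactly as in the remark after Theorem~\ref{uniqueint}.

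To organise the branching, I would first record that any surface tangent to $E^{cu}$ through a point $x$ contains the full unstable leaf $W^u(x)$, by unique integrability of $E^u$; hence $L_1\cap L_2$ is $W^u$-saturated and the branching locus $B=\{x:\text{more than one leaf of }\mathcal W^{cu}\text{ passes through }x\}$ is a non-empty, closed, $g$-invariant, $W^u$-saturated set. Inside each $cu$-leaf the center direction is transverse to $W^u$, and the non-crossing property orders the branching leaves along $E^c$, so $L_1$ and $L_2$ bound a center-gap. The mechanism I would try to exploit is dynamical: choosing a recurrent point $p$ in a minimal subset of $B$ and following the two leaves through $p$, forward iteration expands the unstable segments $g^n(W^u(p))\subset B$ while recurrence returns $g^n(p)$ near $p$; combined with the invariant ordering along $E^c$, this should force the center-gap to close up and the two branching leaves to glue into a single compact leaf $T\subset\mathcal W^{cu}$.

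The hard part will be exactly this closing-up step. The set $B$ need not be a submanifold, the center foliation inside a $cu$-leaf need not even exist --- that failure is the very hypothesis we are using --- and controlling the width of the center-gap under iteration seems to require global information about how the branching leaves sit in the universal cover $\widetilde M$. There one would attempt a Novikov--Palmeira type dichotomy for the lifted branching foliation: either every leaf is a plane, in which case the ordering is complete and the branching should unwind into a genuine $cu$-foliation (contradicting non-coherence), or there is a compact leaf, which is our torus. Making this dichotomy precise for branching rather than genuine foliations, and ruling out recurrent non-compact configurations with no closed leaf, is where the hyperbolicity estimates must be used quantitatively and not merely through the topological non-crossing property. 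As a consistency check, any $cu$-torus produced this way cannot itself be a leaf of an invariant $cu$-foliation, by Theorem~\ref{uniqueint}; this is precisely why its presence is compatible with --- and indeed the cause of --- the failure of $cu$-dynamical coherence.
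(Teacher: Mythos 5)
This statement is Conjecture~\ref{conj}: the paper does \emph{not} prove it. It was open at the time of writing, and the paper itself records that the strongest partial result is Hammerlindl--Potrie's proof for manifolds with solvable fundamental group (which, moreover, uses Theorem~\ref{uniqueint}). So there is no proof in the paper to compare yours against; the only question is whether your proposal closes the conjecture on its own. It does not, and to your credit you say so: what you have written is a program with the decisive step left open.

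Your reduction to the $cu$-case via $f^{-1}$ and the use of Burago--Ivanov branching foliations are sensible and are indeed the standard line of attack (essentially the one followed in the solvable case), but the genuine gap is exactly where you flag it: the ``closing-up step.'' Nothing in your argument forces the branching of $\mathcal{W}^{cu}$ to produce a \emph{compact} leaf. The heuristic that recurrence plus unstable expansion ``should force the center-gap to close up and the two branching leaves to glue into a single compact leaf'' is not an argument: leaves of a branching foliation are permitted to merge along rather arbitrary closed $u$-saturated sets, so the gap can collapse asymptotically without any leaf being closed --- merging is precisely what branching allows, and it carries no compactness. Similarly, your proposed dichotomy ``all lifted leaves are planes $\Rightarrow$ the branching unwinds into a genuine invariant foliation'' is unsupported: Novikov--Palmeira applies to honest foliations, and the Burago--Ivanov approximating foliations $\mathcal{F}^{cu}_\eps$ are \emph{not} $g$-invariant, so a Reeb component or torus leaf of $\mathcal{F}^{cu}_\eps$ is only transverse to $E^s$, not tangent to $E^{cu}$; turning such an object into a $cu$-torus requires the backward-iteration argument of Theorem~\ref{theorem.taut} and Proposition~\ref{cu.tori.periodic}, and, conversely, there is no known way to deduce non-tautness of $\mathcal{F}^{cu}_\eps$ (or a closed leaf of $\mathcal{W}^{cu}$) from the failure of dynamical coherence --- that implication \emph{is} the conjecture. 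A secondary, probably fixable, issue: after passing to a finite cover and an iterate to orient the bundles, the projection of a torus leaf is a priori only a compact \emph{immersed} surface tangent to $E^{cu}$, since surfaces tangent to the same non-integrable plane field may intersect without coinciding; so ``a compact leaf found in the cover projects to a torus below'' also needs justification before you can claim $f$ itself admits a $cu$- or $sc$-torus.
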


In Section \ref{section.cu.tori} we prove that, in fact, not every manifold can contain a $cu$- or an $sc$-torus:
\begin{theorem}\label{invariant.tori} Let $f:M\to M$ be a partially hyperbolic diffeomorphism of an orientable 3-manifold. If there exists a torus tangent to either $E^{s}\oplus E^{u}$, $E^{c}\oplus E^{u}$ or $E^{s}\oplus E^{c}$, then the manifold $M$ is either:
\begin{enumerate}
 \item the 3-torus $\T^{3}$
 \item the mapping torus of $-id:\T^{2}\to\T^{2}$
 \item the mapping torus of a hyperbolic automorphism $A:\T^{2}\to \T^{2}$
\end{enumerate}
\end{theorem}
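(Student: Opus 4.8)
The plan is to reduce the statement to a classification of closed orientable $3$-manifolds carrying an embedded incompressible torus on which some diffeomorphism acts hyperbolically on first homology (an \emph{Anosov torus}), and then to exhibit our torus $T$ as such. First, since $f^{-1}$ is partially hyperbolic with $E^s$ and $E^u$ interchanged and $E^c$ fixed, a torus tangent to $E^s\oplus E^c$ is carried to one tangent to $E^c\oplus E^u$; so it suffices to treat $T$ tangent to $E^{cu}=E^c\oplus E^u$ and $T$ tangent to $E^{su}=E^s\oplus E^u$. In either case $E^u\subset T_xT$ for every $x\in T$, so by unique integrability the whole strong unstable leaf through a point of $T$ lies in $T$, and $W^u$ restricts to a topological foliation of the $2$-torus $T$. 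Because strong unstable leaves of a partially hyperbolic diffeomorphism are never compact, every leaf of $W^u|_T$ is a non-compact line, and a foliation of $\T^2$ with no compact leaf has a well-defined irrational asymptotic direction and no null-homotopic leaves (in the $E^{su}$ case the same applies to $W^s|_T$). From the absence of compact and of null-homotopic leaves I would deduce, via an innermost-disk and Poincar\'e--Bendixson argument on the trace of a compressing disk, that $T$ is \emph{incompressible}.

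Next I would promote $T$ to a fiber of a fibration $M\to S^1$. The transverse bundle ($E^s$ in the $E^{cu}$ case, $E^c$ in the $E^{su}$ case) is a line field transverse to $T$ everywhere, and $f$ carries $T$ to a torus $f(T)$ tangent to the same distribution. Using this transverse direction together with standard $3$-manifold topology (irreducibility of manifolds admitting partially hyperbolic diffeomorphisms), I would show that $T$ is non-separating and that the iterates $f^n(T)$, joined along the transverse foliation, sweep out $M$; this exhibits $M$ as a mapping torus with fiber $T$. Once $M$ is a torus bundle one obtains, after passing to a power, a diffeomorphism fixing a torus isotopic to $T$, whose induced action $A$ on $H_1(T)\cong\Z^2$ records the monodromy. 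Invariance of $W^u$ makes the asymptotic direction $[u]\in H_1(T;\R)$ an eigendirection of $A$; since strong unstable leaves expand, the associated eigenvalue has modulus strictly greater than $1$, and as $\det A=\pm1$ the complementary eigenvalue has modulus strictly less than $1$. Hence $A$ is hyperbolic and $T$ is an Anosov torus.

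Finally I would invoke the classification of $3$-manifolds admitting an Anosov torus. A hyperbolic $A$ with $|\mathrm{tr}\,A|>2$ yields a mapping torus of a hyperbolic automorphism, case (3); the remaining monodromies compatible with an invariant expanding direction are $A=\mathrm{id}$ and $A=-\mathrm{id}$, giving $\T^3$ and the mapping torus of $-\mathrm{id}$, cases (1) and (2). Parabolic monodromies are excluded because their eigenvalues have modulus $1$, contradicting the expansion along $[u]$, and the elliptic automorphisms of order $3,4,6$ are excluded because their complex eigenvalues leave no real line invariant, whereas $[u]$ must be preserved. I expect the genuine difficulty to lie in the fibration step: passing from the existence of a single torus tangent to the bundle to the global mapping-torus structure of $M$, that is, controlling how the transverse foliation and the iterates $f^n(T)$ fill the manifold and guaranteeing non-separation. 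The incompressibility estimate and the reduction to an eventually invariant Anosov torus are the other delicate points; once these are established, the classification of Anosov tori produces exactly the three manifolds in the statement.
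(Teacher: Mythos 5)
Your high-level skeleton (produce an Anosov torus and invoke the classification of $3$-manifolds admitting one) is exactly the paper's strategy, but two steps that carry the real weight of the paper's proof are missing or wrong. First, you never actually produce an \emph{invariant} torus on which to read off the action $A$ on $H_1$. The hypothesis only gives a torus tangent to the bundle, not an $f$-invariant one, and your phrase ``after passing to a power, a diffeomorphism fixing a torus isotopic to $T$'' has no justification: even if $M$ were a torus bundle, no power of $f$ need fix any fiber. The paper devotes Proposition \ref{cu.tori.periodic} to this (for $cu$-tori: take Hausdorff limits of $f^{-n}(T)$, use the tubular neighborhood $U(T)$ swept out by local stable leaves and the backward expansion of $E^s$ to get $f^{N-L}(\overline{U(T)})\subset U(T)$, and extract the periodic torus as $\bigcap_k f^{k(N-L)}(U(T))$) and Proposition \ref{su.torus.anosov} for $su$-tori (compactness of the lamination by all $su$-tori, recurrence, and a diffeotopy). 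Second, your deduction that the eigenvalue along the asymptotic direction $[u]$ has modulus $>1$ ``because strong unstable leaves expand'' is too quick: metric expansion of arc length inside leaves does not by itself move homology, and the cases $A=\pm\mathrm{id}$ are consistent with an invariant irrational direction. The paper's Lemma \ref{hyperbolic} rules these out by a separate argument: if $g_*=\mathrm{id}$ the lift is $\mathrm{id}$ plus a bounded function, so diameters grow at most linearly while leaf lengths grow exponentially, producing a long leaf arc with nearby endpoints and hence, by Poincar\'e--Bendixson, a compact leaf --- a contradiction. You need some version of this.

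The middle of your argument --- promoting $T$ to the fiber of a fibration $M\to S^1$ by sweeping out $M$ with the iterates $f^n(T)$ joined along the transverse foliation --- is both unnecessary and unsound. It is unnecessary because once you have an Anosov torus you are citing the classification theorem of \cite{anosov_tori} anyway (together with irreducibility of manifolds supporting partially hyperbolic diffeomorphisms, \cite[Lemma 6.3]{survey}), and that theorem already outputs the three manifolds; you do not get to re-derive it by an elementary fibration argument, since its proof goes through $3$-manifold topology (the torus need not be a fiber a priori, and the iterates $f^n(T)$ need not fill $M$ --- the invariant torus can be an isolated attractor). It is unsound because your final paragraph conflates the monodromy of the putative fibration with the action of $f$ on $T$: the constraint ``invariant expanding direction'' applies to the latter, not the former (e.g.\ $\T^3$ is the mapping torus of $\mathrm{id}$, yet contains Anosov tori). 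The incompressibility observation is fine but is not where the difficulty lies.
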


This follows essentially from Proposition \ref{cu.tori.anosov}, where we prove that the dynamics on any invariant torus tangent to $E^{cu}$, or $E^{sc}$ is isotopic to the one generated by a hyperbolic linear automorphism. When a manifold $M$ admits an embedded 2-torus and a global diffeomorphism $g:M\to M$ such that $g(T)=T$ and $g|_{T}$ is isotopic to an Anosov diffeomorphism, then we say that $M$ admits an {\em Anosov torus}.\par
In \cite{anosov_tori}, we proved that the only irreducible manifolds admitting an Anosov torus are the ones listed in Theorem \ref{invariant.tori}. But on the other hand, if $M$ admits a partially hyperbolic diffeomorphism, then $M$ is an irreducible manifold. See Section \ref{section.cu.tori} for more details.\par

As a consequence of Theorem \ref{invariant.tori}, if Conjecture \ref{conj} were true, then the only manifolds supporting non-dynamically coherent diffeomorphisms would be the ones listed in Theorem \ref{invariant.tori}. In Proposition \ref{cu.tori.periodic} we show that the existence of a $cu$-torus implies the existence of a {\em periodic} $cu$-torus, which is attracting. An analogous statement holds for $sc$-tori. Therefore, were Conjecture \ref{conj} true, all partially hyperbolic diffeomorphisms $f$ with $\Omega (f)=M$ would be dynamically coherent. Hammerlindl and Potrie have proven that Conjecture \ref{conj} is true for manifolds with solvable fundamental group \cite{hp}; namely, for the manifolds that are finitely covered by the ones listed in Theorem \ref{invariant.tori}. This is the greatest advance in Conjecture \ref{conj} so far. Theorem \ref{uniqueint} is used in their proof. \newline \par

A foliation is {\de taut } if there is an embedded circle that transversely intersects each one of its leaves. Taut foliations play an important role in the description of 3-dimensional manifolds. In Section \ref{taut} we show:
\begin{theorem} \label{theorem.taut} Let $E^s$ be the strong stable bundle of a partially hyperbolic diffeomorphism of a closed orientable 3-dimensional manifold $M$. If $\mathcal{F}$ is a foliation transverse to $E^s$ that is not taut, then there exists a periodic $cu$-torus. In particular, $M$ admits an Anosov torus.
\end{theorem}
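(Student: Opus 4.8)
The plan is to combine the classical structure theory of codimension-one foliations on $3$-manifolds with the domination inherent in partial hyperbolicity. The bundle $E^s$ is one-dimensional and uniquely integrable, giving the strong stable foliation $\cW^s$, whose leaves are non-compact lines and which is everywhere transverse to the two-dimensional foliation $\mathcal F$. After possibly replacing $f$ by a power and passing to the orientation double cover of $E^s$ (equivalently, making $\mathcal F$ transversely orientable; I will check at the end that the conclusions descend), I may assume $E^s$ is oriented and $\mathcal F$ is co-oriented, with the co-orientation given by the $f$-contracting direction of $E^s$.

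First I would extract a torus from the failure of tautness. By Novikov's theorem together with the Goodman--Sullivan characterization of taut foliations, a co-oriented codimension-one foliation of a closed orientable $3$-manifold that is not taut contains a dead-end component: a compact region $R$ whose boundary is a union of compact leaves and into which every transversal, once it enters on the positive side, can never leave. Since $\cW^s$ is transverse to $\mathcal F$, its leaves are such transversals, so $R$ is positively invariant under the strong stable flow $\phi_t$. The boundary leaves are tori: a dead-end component is built from Reeb components, whose boundary is a $2$-torus, and the transverse line field $\cW^s$, having no closed leaves, rules out the spherical vanishing cycles of Novikov. Fix one boundary torus $T\subset\partial R$; it is a compact leaf of $\mathcal F$, transverse to $\cW^s$.

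Next I would push $T$ forward and let the dynamics align it with $E^{cu}$. Domination gives the key estimate: any plane transverse to $E^s$ is the graph of a linear map $E^{cu}\to E^s$, and under $Df$ the norm of this map is multiplied by at most $\|Df|_{E^s}\|\, m(Df|_{E^{cu}})^{-1}<1$, where $m$ denotes the conorm; hence $Df^n(T_x\mathcal F)\to E^{cu}$ uniformly as $n\to\infty$. In particular the tangent planes of the tori $f^n(T)$ converge to $E^{cu}$. To control the tori themselves, I would use that $f$ preserves $\cW^s$ and contracts it: combined with the positive $\phi_t$-invariance of $R$, this should give $f(R)\subset R$, so that $f^n(T)=\partial f^n(R)$ is a nested sequence of tori and $\Lambda=\bigcap_{n\ge0}f^n(R)$ is a compact $f$-invariant set. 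The $\cW^s$-\emph{thickness} of $f^n(R)$ is contracted to zero while the directions along $E^{cu}$ do not degenerate, so the frontier of $\Lambda$ is a single embedded surface $T_\infty$ whose tangent planes are the limits above, i.e.\ tangent to $E^{cu}$. Thus $T_\infty$ is an integral surface of $E^{cu}$, and as a Hausdorff limit of the tori $f^n(T)$ it is itself a torus: a $cu$-torus.

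The main obstacle is precisely this last limiting step: turning the foliation-theoretic trapping (a statement about transversals) into genuine $f$-invariance $f(R)\subset R$ with the correct co-orientation, and then proving that the collapsing region $f^n(R)$ limits onto an embedded $2$-dimensional surface tangent to $E^{cu}$ rather than onto something lower-dimensional or non-embedded; the domination estimate controls tangent planes, but controlling the geometry of the nested tori (uniform transversality to $\cW^s$, no pinching in the $E^{cu}$-directions) is where the work lies. Granting this, the proof concludes quickly: by Proposition \ref{cu.tori.periodic} the existence of a $cu$-torus forces a \emph{periodic}, attracting $cu$-torus, and by Proposition \ref{cu.tori.anosov} together with the discussion following Theorem \ref{invariant.tori} the first-return map of $f$ to such a torus is isotopic to a hyperbolic linear automorphism, so $M$ admits an Anosov torus. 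Finally I would check that these conclusions are insensitive to the initial finite cover and to the replacement of $f$ by a power, completing the proof of Theorem \ref{theorem.taut}.
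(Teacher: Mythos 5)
Your overall strategy --- extract a boundary torus $T$ of a dead-end component, use domination to push its tangent planes onto $E^{cu}$ under forward iteration, trap the iterates in a shrinking region, and then invoke Propositions \ref{cu.tori.periodic} and \ref{cu.tori.anosov} --- is the right one, and the first and last steps match the paper (which quotes Lemma 4.28 of Calegari for the fact that boundary leaves of dead-end components are tori). The gap is exactly where you flag it, and it is not a technicality: the containment $f(R)\subset R$ has no reason to hold. The dead-end component $R$ is determined by the foliation $\mathcal F$, which is \emph{not} assumed $f$-invariant (and in the intended application, the Burago--Ivanov foliation of Theorem \ref{burago.ivanov}, it genuinely is not); $f(R)$ is a union of leaves of $f(\mathcal F)$, a different foliation, and the trapping property of $R$ for transversals of $\mathcal F$ says nothing about where $f$ sends $R$. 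So the nested intersection $\bigcap_{n\ge 0} f^n(R)$ is not available, and the limiting argument that is supposed to produce the $cu$-torus has nothing to stand on.

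The paper's replacement for this step is the proof of Proposition \ref{cu.tori.periodic}, applied verbatim to $T$: the trapping region is built out of the \emph{stable} foliation, which is invariant, rather than out of $\mathcal F$. Concretely, all backward iterates $f^{-n}(T)$ are tori transverse to $E^s$; by compactness of the space of compact subsets of $M$ in the Hausdorff metric some subsequence converges, so for any $\eps>0$ there are $N\gg L$ with $d_H(f^{-N}(T),f^{-L}(T))<\eps$. Letting $U(T)$ be the tubular neighborhood of $T$ swept out by local stable leaves, the exponential growth of stable leaves under $f^{-1}$ gives $f^{-L}(\overline{U(T)})\subset f^{-N}(U(T))$, i.e.\ $f^{N-L}(\overline{U(T)})\subset U(T)$, and $T_0=\bigcap_{k\ge0}f^{k(N-L)}(U(T))$ is the desired periodic $cu$-torus: its tangent planes are limits of forward iterates of planes transverse to $E^s$, hence equal to $E^{cu}$ by the domination estimate you already stated. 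If you substitute this recurrence-plus-stable-saturation argument for your $f(R)\subset R$ step, the rest of your write-up goes through.
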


A foliation ${\mathcal F}$ like the one mentioned in Theorem \ref{theorem.taut} always exists, due to Burago-Ivanov  \cite{burago_ivanov}, see more details in Theorem \ref{burago.ivanov}. As a consequence of Theorem \ref{theorem.taut}, all manifolds supporting partially hyperbolic diffeomorphisms are finitely covered by manifolds supporting taut foliations. Perhaps the theory of taut foliations could give some enlightening to the description of partially hyperbolic systems (see, for instance, \cite{calegari}).

\section{Dynamics on $cu$- and $sc$-tori}\label{section.cu.tori}
In this section, we shall prove Theorem \ref{invariant.tori}, which will follow from certain dynamical properties of $cu$- and $sc$-tori. \par
As we said in the Introduction, a manifold $M$ {\em admits an Anosov torus} if there exists a diffeomorphism $g:M\to M$ and an embedded $g$-invariant torus $T$ such that $(g|_{T})_*:\pi_1(\mathbb{T}^2)\rightarrow \pi_1(\mathbb{T}^2)$ is hyperbolic. Admitting an Anosov torus is a global property. In \cite{anosov_tori}, we prove that very few 3-manifolds have such a property.

\begin{theorem}\cite{anosov_tori} Let $M$ be an irreducible orientable 3-manifold admitting an Anosov torus, then
$M$ is either:
\begin{enumerate}
  \item the 3-torus $\T^{3}$
 \item the mapping torus of $-id:\T^{2}\to\T^{2}$
 \item the mapping torus of a hyperbolic automorphism $A:\T^{2}\to \T^{2}$
\end{enumerate} 
\end{theorem}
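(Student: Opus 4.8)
The plan is to work entirely in 3-manifold topology: the only dynamical input is that $A:=(g|_T)_*\in\mathrm{GL}_2(\Z)$ acting on $\pi_1(T)\cong H_1(T)\cong\Z^2$ is hyperbolic, so that no power $A^k$ has an eigenvalue equal to a root of unity (in particular $A$ and all its powers fix no rank-one subgroup of $\Z^2$ and have no eigenvalue $\pm1$). I would first reduce to the case of an incompressible torus.

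\textbf{Step 1: the torus is incompressible.} Suppose $T$ were compressible. By the loop theorem and the irreducibility of $M$, $T$ bounds a (possibly knotted) solid torus $V$ on one side, with meridian slope $\mu\subset T$ the unique isotopy class of essential curve bounding a disk in $V$. The homeomorphism $g$ either preserves $V$ or exchanges it with the complementary side. If it preserves $V$, then $g|_T$ preserves $\mu$ up to isotopy and sign, so $A$ has $\pm1$ as an eigenvalue; if it exchanges the two sides, then both sides are solid tori, so $M$ is a lens space and $A^2$ fixes $\mu$, whence $A^2$ has eigenvalue $\pm1$. Either way $A$ fails to be hyperbolic, a contradiction. (The same meridian-slope argument disposes of the variant in which $T$ lies in a $3$-ball.) Consistently, writing $i\colon T\hookrightarrow M$ and using $g_*\circ i_*=i_*\circ(g|_T)_*$, one sees $A$ preserves $\ker\bigl(i_*\colon\pi_1(T)\to\pi_1(M)\bigr)$, which the hyperbolicity of $A$ now forces to be trivial. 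Thus $T$ is two-sided and $\pi_1$-injective.

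\textbf{Step 2: reduction to Seifert or Sol pieces.} Since $M$ is irreducible and contains the two-sided incompressible torus $T$, it is Haken. I would pass to the JSJ decomposition: its family of canonical tori is unique up to isotopy, so $g$ permutes it, and after replacing $g$ by a power we may assume $g$ fixes each piece and each JSJ torus up to isotopy and that $T$ is isotoped either onto a JSJ torus or into a single piece. No piece meeting $T$ can be atoroidal hyperbolic: a self-homeomorphism of a finite-volume hyperbolic piece realizes, by Mostow rigidity, an element of a finite mapping class group, hence induces a finite-order map on each boundary torus, incompatible with the infinite-order, root-of-unity-free matrix $A$. Therefore $M$ is either a Sol manifold, a Seifert fibered space, or a graph manifold all of whose pieces are Seifert fibered.

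\textbf{Step 3: extracting the three families.} In the Sol case $M$ is a torus bundle over $S^1$ whose fiber is (isotopic to) $T$ and whose monodromy is hyperbolic, giving family (3). In the Seifert and graph cases the Seifert fibration of each relevant piece is canonical up to isotopy, so $g$ preserves it; it then induces a $\pm1$ on the regular-fiber direction and a finite-order or parabolic map on the base orbifold. Any vertical or horizontal incompressible torus therefore inherits an action with a $\pm1$ eigenvalue along the fiber, or one governed by the (never hyperbolic) base dynamics — so $A$ cannot be hyperbolic unless the underlying piece is flat, which forces $M=\T^3$ or the mapping torus of $-\mathrm{id}$, families (1)–(2). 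This excludes the Nil, $\widetilde{\mathrm{SL}_2}$ and $H^2\times\R$ Seifert manifolds and all nontrivial graph manifolds, leaving exactly the three listed types (the orientability hypothesis removes the non-orientable and torus-semibundle variants). The main obstacle is precisely this last step: invoking uniqueness of Seifert fibrations, classifying the vertical and horizontal incompressible tori, and verifying through the Seifert invariants that a hyperbolic self-map on such a torus arises only in the flat case — together with the careful handling of the small non-Haken Seifert exceptions and the ``pass to a power, isotope onto the JSJ family'' reductions. Steps 1 and 2 are clean; essentially all the weight of the theorem sits in the exhaustive Seifert/graph analysis of Step 3.
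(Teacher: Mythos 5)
First, a point of order: the paper you were given does not prove this theorem at all --- it is imported verbatim from the reference \cite{anosov_tori} (\emph{Tori with hyperbolic dynamics in 3-manifolds}, J.~Mod.~Dyn.~5 (2011)), so there is no in-paper proof to compare against. Your outline does follow the same broad strategy as that reference: incompressibility of the Anosov torus, JSJ decomposition, Mostow rigidity to kill hyperbolic pieces, and a Seifert-fibered analysis. Step 1 is essentially correct and standard. But as a proof your proposal has genuine gaps, concentrated exactly where you admit ``all the weight'' sits.

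The first gap is logical: in Step 2 you rule out hyperbolic pieces \emph{meeting} $T$, yet conclude that all of $M$ is Sol, Seifert, or a graph manifold. That does not follow; a priori $M$ could have hyperbolic pieces far from $T$. The way the actual argument closes this is local-to-global: a fibration-preserving homeomorphism fixes the fiber slope (up to sign) on every vertical torus and on every boundary torus, which is incompatible with a hyperbolic action; hence the Seifert pieces meeting $T$ cannot have a fibration that is unique up to isotopy, so they must be among the exceptional pieces ($T^2\times I$ or the twisted $I$-bundle over the Klein bottle). Since JSJ decompositions do not contain $T^2\times I$ pieces, $M$ itself is forced to be a torus bundle over $S^1$ or a union of two twisted $I$-bundles (a torus semibundle) --- and only then is there no room left for other pieces. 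Your Step 3 never isolates this mechanism; instead it leans on ``uniqueness of the Seifert fibration,'' which fails precisely in the cases that actually occur.

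The second gap is in the endgame, and two of your stated claims there are wrong. (a) ``$A$ cannot be hyperbolic unless the piece is flat, which forces $M=\T^3$ or the mapping torus of $-id$'' is false as stated: there are six orientable flat 3-manifolds, and the mapping tori of automorphisms of order $3$, $4$, $6$, as well as the Hantzsche--Wendt manifold, must be excluded by a separate argument --- e.g.\ the centralizer in $GL_2(\Z)$ of an elliptic element is finite, hence contains no hyperbolic matrix, plus a classification of the incompressible tori in those manifolds. (b) Orientability does \emph{not} remove torus semibundles: orientable Sol semibundles exist, are irreducible, and contain an incompressible canonical torus; they are excluded instead because the image of the mapping class group of the twisted $I$-bundle over the Klein bottle in $GL_2(\Z)$ (acting on its torus boundary) is finite, so no hyperbolic matrix extends across either side. (The flat semibundle that does admit an Anosov torus is the mapping torus of $-id$, which is why it survives into the list.) Without these computations --- centralizers for torus bundles, boundary mapping class groups for semibundles, and the treatment of the small non-Haken Seifert manifolds --- the proposal is a correct plan of attack but not a proof.
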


A 3-manifold admitting a partially hyperbolic diffeomorphism is always irreducible, see \cite[Lemma 6.3]{survey}. Theorem \ref{invariant.tori} then follows from the following propositions:

\begin{proposition}\label{cu.tori.periodic}
 The existence of a $cu$-torus implies the existence of a periodic $cu$-torus.
\end{proposition}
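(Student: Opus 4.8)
The plan is to study the forward orbit of the given $cu$-torus under $f$ and to extract an invariant torus as a limit, using the contraction of the strong stable foliation as the driving mechanism. Throughout, write $\cW^{s}$ and $\cW^{u}$ for the (uniquely integrable) strong stable and strong unstable foliations, tangent to $E^{s}$ and $E^{u}$. Let $T$ be a $cu$-torus. Since $E^{cu}$ is everywhere transverse to $E^{s}$, the torus $T$ is transverse to $\cW^{s}$; and since $E^{u}\subset E^{cu}$, unique integrability of $E^{u}$ forces the strong unstable leaf through each point of $T$ to stay in $T$, so $T$ is $\cW^{u}$-saturated. Both properties pass to every iterate $T_{n}:=f^{n}(T)$, which is again a $cu$-torus.

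First I would control how the tori $T_{n}$ meet one another. The essential observation is that two $cu$-tori can never cross transversally: at any common point $x$ both surfaces have tangent plane exactly $E^{cu}_{x}$. Hence $T\cap f(T)$ is a closed, $\cW^{u}$-saturated set along which the two tori are tangent. I would show that this forces the alternative ``$f(T)=T$ or $f(T)\cap T=\varnothing$'': if the intersection were a proper nonempty subset, its frontier in $T$ would again be $\cW^{u}$-saturated, and analysing the way $f(T)$ peels off $T$ across a strong unstable leaf, using transversality to $\cW^{s}$ to orient the two sides, yields a contradiction. If $f(T)=T$ for this or any iterate we are done, so we may assume the $T_{n}$ are pairwise disjoint.

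Next I would convert disjointness into convergence. A $cu$-torus is incompressible --- being $\cW^{u}$-saturated it cannot bound a solid torus nor lie in a ball without contradicting the expansion along $E^{u}$ --- so the $T_{n}$ are infinitely many pairwise disjoint incompressible tori in the closed irreducible manifold $M$. By Kneser--Haken finiteness, two of them, say $T_{a}$ and $T_{b}$ with $a<b$, are isotopic and cobound a region $R\cong \T^{2}\times[0,1]$. Put $g=f^{b-a}$. Transversality to $\cW^{s}$ linearly orders the disjoint tori $T_{a},T_{a+(b-a)},T_{a+2(b-a)},\dots$ inside $R$, and $g$ shifts this order by one; since $g$ uniformly contracts lengths along $\cW^{s}$ while the tori stay uniformly transverse to $\cW^{s}$, the stable widths of the gaps between consecutive tori tend to $0$. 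The nested sequence $\overline{g^{n}(R)}$ then shrinks to a single torus $T_{\infty}$, tangent to $E^{cu}$ (its tangent planes are limits of the uniformly $E^{cu}$-tangent planes of the $T_{n}$, trapped in a center-unstable cone field). As $g(T_{\infty})=T_{\infty}$, this is a periodic $cu$-torus, and it is attracting because nearby stable leaves are contracted onto it.

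I expect the main obstacle to be the intersection analysis of the second paragraph: $E^{cu}$ is only H\"older and is not uniquely integrable, so the tangency of two integral tori along a curve does not by itself imply that they coincide there. The argument must therefore lean on the single piece of unique integrability that is available, namely the $\cW^{u}$-saturation of the tori, to promote ``tangent along an unstable leaf'' into ``locally equal'' and thereby to exclude a proper, partial intersection. A secondary technical point is verifying that the limit $T_{\infty}$ is a genuinely embedded torus tangent to $E^{cu}$ rather than a lamination, which is where the uniform transversality to $\cW^{s}$ together with the cone-field estimates are needed.
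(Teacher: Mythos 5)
Your strategy is genuinely different from the paper's, but it has two concrete gaps. (1) The equal-or-disjoint dichotomy for the tori $f^{n}(T)$ is not established; you flag it as the main obstacle but do not resolve it, and everything downstream (pairwise disjointness, incompressibility plus Kneser--Haken, the product region $R$) depends on it. Tangency at every common point only rules out \emph{transverse} crossings. Since $E^{cu}$ is not uniquely integrable --- this is exactly the phenomenon behind the non-dynamically coherent examples of \cite{HHU.nondinco} --- two $cu$-tori may a priori coincide on a proper closed $\cW^{u}$-saturated subset of one of them and branch apart in the center direction along its frontier; the ``peeling off across an unstable leaf'' analysis is named but not carried out, and it is not clear it can be. (2) Even granting disjointness and the product region $R$ between $T_{a}$ and $T_{b}$, you never establish the trapping property $g(\overline{R})\subset R$ for $g=f^{\,b-a}$. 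The torus $g(T_{b})=T_{a+2(b-a)}$ is disjoint from $T_{a}$ and $T_{b}$, but it may lie on the far side of $T_{b}$, outside $R$; then $g(R)$ meets $R$ only along $T_{b}$, the regions $g^{n}(R)$ stack up rather than nest, and $\bigcap_{n}\overline{g^{n}(R)}$ is not the desired torus. The claim that the stable widths of the gaps tend to $0$ presupposes exactly the ordering of the tori $T_{a+k(b-a)}$ inside $R$ that is in question.

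The paper's proof avoids both issues and is considerably lighter. It iterates \emph{backward}: the stable saturation $U(T)=\bigcup_{x\in T}W^{s}_{\delta}(x)$ is a tubular neighborhood of $T$ (transversality to $E^{s}$), and stable arcs grow exponentially under $f^{-1}$, so $f^{-n}(U(T))$ is a stable tube of exponentially growing width around $f^{-n}(T)$. Compactness of the space of compact subsets of $M$ in the Hausdorff metric gives $N\gg L$ with $d_{H}(f^{-N}(T),f^{-L}(T))<\eps$, whence $f^{-L}(\overline{U(T)})\subset f^{-N}(U(T))$, i.e. $f^{N-L}(\overline{U(T)})\subset U(T)$ is a genuine trapping region; the periodic $cu$-torus is $\bigcap_{k\ge0}f^{k(N-L)}(U(T))$, whose tangent planes are limits of those of the $cu$-tori $f^{k(N-L)}(T)$. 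No disjointness dichotomy, incompressibility, or Haken finiteness is needed. If you want to salvage your route, you would need to replace the product region $R$ by such a stable tube around a single torus, at which point you have essentially reconstructed the paper's argument.
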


\begin{proposition}\label{cu.tori.anosov}
The dynamics on an invariant $cu$-torus is isotopic to hyperbolic. 
\end{proposition}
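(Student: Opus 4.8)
The plan is to reduce the statement to showing that the linear part $A := (f|_T)_* \in GL(2,\Z)$ acting on $H_1(T;\Z)\cong\Z^2$ is hyperbolic, i.e. has an eigenvalue off the unit circle; for then $f|_T$ is isotopic to the linear automorphism of $\T^2$ induced by $A$, which is Anosov. The only structure I would use is the strong unstable foliation. Since $E^u$ is tangent to $T$ and $T$ is invariant, the strong unstable leaves through points of $T$ lie in $T$, so $\cW^u$ restricts to an $f$-invariant one-dimensional foliation $\cW^u_T$ of $T$ whose leaves are uniformly expanded: in an adapted metric (or after replacing $f$ by an iterate, which does not affect hyperbolicity of the linear part) one has $\|Df|_{E^u}\|\ge\lambda>1$. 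Passing to a finite cover of $T$, which is again a torus, I may assume $\cW^u_T$ is orientable and transversely orientable and that the lift of an iterate of $f$ preserves these orientations; hyperbolicity of the resulting linear part is equivalent to that of $A$, so this reduction is harmless.

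The first real step is to show that $\cW^u_T$ has no compact leaf. Any compact leaf is an embedded circle, and it must be essential: a null-homotopic leaf would bound a disk and force a singularity of the (nonsingular) foliation by Poincar\'e--Hopf. Disjoint essential simple closed curves on $\T^2$ are mutually isotopic, so all compact leaves lie in one primitive class $\pm(p,q)$ and their lengths are bounded below by the minimal length $\ell_0>0$ of a curve in that class. Let $K$ be the (closed, nonempty, $f$-invariant) union of compact leaves and set $\ell_*=\inf\{\mathrm{length}(c):c\subset K\}\ge\ell_0>0$. Taking a minimizing sequence and parametrizing by a multiple of arclength, Arzel\`a--Ascoli yields a uniformly convergent subsequence whose limit is a closed curve tangent to $E^u$, hence a compact leaf $c_*$ realizing the infimum. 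But $f^{-1}$ also preserves the foliation and contracts $E^u$, so $f^{-1}(c_*)$ is a compact leaf with $\mathrm{length}(f^{-1}(c_*))\le\lambda^{-1}\ell_*<\ell_*$, contradicting minimality. Hence there are no compact leaves.

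With no compact leaves, I would invoke the classical Poincar\'e theory of orientable foliations (flows) on $\T^2$: the rotation number is then irrational, every leaf lifted to the universal cover $\R^2$ is asymptotic to a single line $\ell$ of irrational slope, and $\ell$ is the common asymptotic direction of the foliation. Because a lift $\tilde f$ of $f$ differs from $A$ by a bounded $\Z^2$-periodic map and permutes the lifted leaves, it sends this asymptotic direction to itself; therefore $A\ell=\ell$, so $\ell$ is a real $A$-invariant line. Since $\ell$ has irrational slope and $A$ is an integer matrix, its eigenvalue $\mu$ on $\ell$ cannot be rational: a rational eigenvalue of $A$ is an integer dividing $\det A=\pm1$, hence equals $\pm1$ and has a rational eigenline. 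Thus $\mu$ is real and irrational, the companion eigenvalue $\det(A)/\mu$ is also real with product of modulus $1$, and $\mu\ne\pm1$ forces $|\mu|\ne1$. Hence $A$ is hyperbolic and $f|_T$ is isotopic to an Anosov automorphism.

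The main obstacle is the no-compact-leaf step. The subtlety is that a compact leaf need not be periodic, so one cannot immediately contradict it with the elementary fact that a circle diffeomorphism cannot expand every tangent vector; the minimal-length argument (together with attainment of the infimum via Arzel\`a--Ascoli and positivity via essentiality) is what makes the expansion bite. The remaining input, converting the absence of compact leaves into an irrational $A$-invariant asymptotic direction, rests on the structure theory of torus foliations and is where I would be most careful about the orientability reductions.
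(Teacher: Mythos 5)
Your overall strategy is the same as the paper's (Lemma~3.5 there): restrict the expanded unstable foliation to the invariant torus, rule out compact leaves, extract an $A$-invariant asymptotic direction of irrational slope in the universal cover, and conclude from integrality of $A$. The no-compact-leaf step and the Poincar\'e-theory step are fine (indeed the compact-leaf exclusion can be shortened: any compact leaf $c$ is essential, and $f^{-n}(c)$ are compact leaves of length tending to $0$, below the systole). But there is a genuine gap at the final arithmetic step. From ``$\ell$ is an $A$-invariant line of irrational slope'' you conclude that the eigenvalue of $A$ on $\ell$ is irrational, via the claim that a rational eigenvalue of $A\in GL(2,\Z)$ has a rational eigenline. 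That claim fails exactly when the eigenspace is two-dimensional, i.e.\ when $A=\pm\mathrm{Id}$: then every line, including $\ell$, is an eigenline with eigenvalue $\pm1$. Your argument correctly excludes the parabolic case (unique eigenline, necessarily rational) and the elliptic case (no real eigenline), but it does not exclude $A=\pm\mathrm{Id}$, and nothing in your data $(a)$ no compact leaves, $(b)$ irrational asymptotic direction, $(c)$ $A\ell=\ell$ contradicts $A=\mathrm{Id}$ --- the linear irrational foliation preserved by the identity satisfies all three. The expansion must be used a second time.

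This missing case is precisely where the paper spends most of its effort. If $A=\mathrm{Id}$ (after your squaring reduction), a lift satisfies $\tilde f=\mathrm{Id}+\alpha$ with $\alpha$ bounded and $\Z^2$-periodic, so the diameter of $\tilde f^{\,n}(X)$ grows at most linearly in $n$, while the length of any leaf arc grows exponentially under iteration. Hence some iterate of a leaf arc is arbitrarily long with endpoints $\varepsilon$-close, and Poincar\'e--Bendixson (or the closed-transversal argument) then produces a compact leaf, contradicting what you already proved. You should insert this argument (or an equivalent one showing that on lifted leaves, divergence of leafwise distance forces divergence of Euclidean distance, which is itself a nontrivial point) to close the gap; with it, your proof matches the paper's.
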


\begin{proposition}\label{su.torus.anosov}
 A manifold admitting an $su$-torus, admits an Anosov torus. 
\end{proposition}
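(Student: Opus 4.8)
The plan is to promote the given $su$-torus to an \emph{invariant} one and then read off the hyperbolicity directly from partial hyperbolicity. Write $E^{su}=E^s\oplus E^u$. Since $Df$ preserves $E^s$ and $E^u$ separately, it preserves $E^{su}$, so every iterate $f^n(T)$ is again a torus tangent to $E^{su}$, i.e.\ an $su$-torus. Because $E^s$ and $E^u$ are tangent to $T$ at every point of $T$ and they integrate to the strong stable and strong unstable foliations $W^s,W^u$, each leaf $W^s(x)$ and $W^u(x)$ with $x\in T$ is the integral curve of a line field tangent to the closed surface $T$, and therefore stays in $T$. Thus $T$ is simultaneously a union of complete strong stable leaves and a union of complete strong unstable leaves, and the traces $\cF^s:=\{W^s(x)\cap T:x\in T\}$ and $\cF^u:=\{W^u(x)\cap T:x\in T\}$ are two \emph{transverse} one-dimensional foliations of $T$.

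First I would record that $\cF^s$ and $\cF^u$ have no closed leaves. Indeed, if some $W^s(x)\subset T$ were a circle $c$, then $f^n(c)=W^s(f^n x)$ would be a complete strong stable leaf of length at most $\lambda^{n}\,\mathrm{length}(c)\to 0$ (here $\lambda<1$ bounds the contraction of $Df$ along $E^s$), contradicting the uniform lower bound on the size of local strong stable manifolds over the compact manifold $M$; the same argument applies to $W^u$ via $f^{-1}$. Consequently each of $\cF^s,\cF^u$ is a foliation of $T^2$ without compact leaves, so by the classical description of such foliations each has a well-defined irrational asymptotic direction in $\mathbb{P}(H_1(T;\R))$, and transversality makes these two directions distinct.

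The heart of the argument is to show that $T$ is \emph{periodic}, i.e.\ $f^k(T)=T$ for some $k\ge 1$; this is the $su$-analogue of Proposition \ref{cu.tori.periodic}, and it is the step I expect to be the main obstacle. The mechanism is as follows. For any $n$ the set $T\cap f^n(T)$ is closed and, being an intersection of two surfaces each saturated by $W^s$ and by $W^u$, it is saturated by both $\cF^s$ and $\cF^u$ inside $T$. When $\cF^u$ is minimal (all leaves dense) the only nonempty closed $\cF^u$-saturated subset of $T$ is $T$ itself, so $T\cap f^n(T)$ is either empty or all of $T$; in the latter case $T\subseteq f^n(T)$ and hence $T=f^n(T)$. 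Thus, failing periodicity, the tori $\{f^n(T)\}_{n\in\Z}$ would be pairwise disjoint. I would rule this out by a compactness argument: infinitely many pairwise disjoint parallel $su$-tori must accumulate, and, using the local product structure of $W^s,W^u$ together with the expansion along $E^u$, two tori that come very close must eventually be carried onto one another by an iterate of $f$, contradicting disjointness. The delicate points here are the treatment of the non-minimal (Denjoy) case for $\cF^u$, where one must check that a closed set saturated by \emph{both} transverse foliations is still forced to be trivial, and the accumulation estimate; I expect these to mirror the proof of Proposition \ref{cu.tori.periodic}.

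Once $T$ is periodic of period $k$, the map $g:=f^{k}$ satisfies $g(T)=T$ and preserves the transverse line fields $E^s|_T$ and $E^u|_T$, uniformly contracting the first and uniformly expanding the second. Hence $g|_T$ is a (smooth) Anosov diffeomorphism of the $2$-torus, and its action $(g|_T)_*$ on $H_1(T;\Z)\cong\Z^2$ is given by a hyperbolic matrix (Franks--Manning). Therefore $g$ and $T$ exhibit $M$ as admitting an Anosov torus, which is exactly the claim.
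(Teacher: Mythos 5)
The final step of your argument (a periodic $su$-torus carries an Anosov diffeomorphism, hence its action on $H_1$ is hyperbolic) is fine, but the step you yourself flag as the main obstacle --- periodicity of the torus --- is a genuine gap, and the mechanism you propose for it does not work. The trapping argument of Proposition \ref{cu.tori.periodic} relies crucially on the torus being \emph{transverse} to $E^s$: one thickens $T$ by local stable leaves to get a tubular neighborhood $U(T)$, and the exponential growth of stable leaves under $f^{-1}$ forces $f^{-L}(\overline{U(T)})\subset f^{-N}(U(T))$, producing an attracting region (and note that even there the conclusion is the existence of \emph{some} periodic $cu$-torus, not that $T$ or an iterate of it is periodic). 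For an $su$-torus the transverse direction is the center direction, which is neither uniformly contracted nor uniformly expanded, so there is no trapping region and no reason why two nearby tori in the family $\{f^n(T)\}$ should be ``carried onto one another by an iterate of $f$.'' Your accumulation argument therefore has nothing to close it up, and the Denjoy case of the saturation argument is also left open. I do not see how to repair periodicity along these lines.

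The paper avoids the issue entirely: it observes that the union of all $su$-tori is a compact lamination (Haefliger), so some leaf $T$ is recurrent, i.e.\ $f^n(T)$ is $C^1$-close to $T$ for some $n$. One then composes $f^n$ with a small ambient diffeotopy $i_1$ taking $f^n(T)$ back to $T$; the resulting global diffeomorphism $\phi$ fixes $T$ and $\phi|_T$ is isotopic to an Anosov diffeomorphism. This suffices because ``$M$ admits an Anosov torus'' only requires \emph{some} diffeomorphism $g$ of $M$ with $g(T)=T$ and $(g|_T)_*$ hyperbolic --- it need not be an iterate of $f$. If you replace your periodicity step by this recurrence-plus-diffeotopy device (keeping your observation that the induced foliations on $T$ have no closed leaves, which is what feeds Lemma \ref{hyperbolic}), the proof goes through.
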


Proposition \ref{cu.tori.anosov} is a direct corollary of Lemma \ref{hyperbolic} below. See also Proposition 2.1 of \cite{brin_burago_ivanov} for a similar result and proof.

\begin{lemma}\label{hyperbolic} Let $\mathcal{W}$ be a foliation of $\,\mathbb{T}^2$ with continuous tangent bundle $T\mathcal{W}$ and invariant by a diffeomorphism $g$. Suppose, in addition, that $||dg|_{T\mathcal{W}}||>1$. Then, $g_*:\pi_1(\mathbb{T}^2)\rightarrow \pi_1(\mathbb{T}^2)$ is hyperbolic.
\end{lemma}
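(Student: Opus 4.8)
The plan is to prove that the integer matrix $A=g_*\in GL_2(\mathbb{Z})$, which satisfies $|\det A|=1$, has a real eigenvector of \emph{irrational} slope. An eigenvalue of an integer matrix with an irrational eigendirection cannot be rational, and a real eigenvalue of modulus $1$ would have to be $\pm1$; hence such an eigenvalue lies off the unit circle, and since $|\det A|=1$ so does the other one. Hyperbolicity of $g_*$ follows at once, so the whole problem is to locate an irrational eigendirection.

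The first movement is to rule out compact leaves, and this is exactly where the expansion hypothesis is spent. By compactness of $\mathbb{T}^2$ there is $\lambda>1$ with $\|dg(x)|_{T_x\mathcal{W}}\|\ge\lambda$ for every $x$, so $g$ multiplies the length of every leaf arc by at least $\lambda$. Distinct leaves are disjoint, and a compact leaf is an embedded circle that must be essential: a null-homotopic one would bound a disc and force, by Poincar\'e--Hopf, a singularity of the line field $T\mathcal{W}$. Since disjoint essential simple closed curves on $\mathbb{T}^2$ are isotopic, all compact leaves represent a single primitive class and have length bounded below by the systole. If compact leaves existed I would take a length-minimizing sequence; as the leaves are $1$-Lipschitz in arc-length parametrization, Arzel\`a--Ascoli produces a limit compact leaf $c_\infty$ realizing the minimal length $m>0$. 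Then $g^{-1}(c_\infty)$ is again a compact leaf with length at most $\lambda^{-1}m<m$, unless $g^{-1}(c_\infty)=c_\infty$; but a $g$-invariant circle cannot be uniformly expanded, since $\int_{c_\infty}\!\|dg\|\,ds=\operatorname{length}(c_\infty)$ contradicts $\|dg\|\ge\lambda>1$. Either alternative is absurd, so $\mathcal{W}$ has no compact leaf.

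With no compact leaf I would invoke the classical rotation-number theory for foliations of the torus: on an essential transversal circle the first-return holonomy is a circle homeomorphism with no periodic point, hence of irrational rotation number, and so $\mathcal{W}$ has a well-defined irrational slope $\alpha$ with the property that, in the universal cover $\mathbb{R}^2$, every leaf stays within bounded Hausdorff distance of a line of direction $u_\alpha$ (the slope-$\alpha$ direction). I then exploit invariance: lift $g$ to $\widetilde g$ with $\widetilde g(x+v)=\widetilde g(x)+Av$ for $v\in\mathbb{Z}^2$, so that $\phi(x):=\widetilde g(x)-Ax$ is $\mathbb{Z}^2$-periodic, hence bounded. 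Taking a leaf $L$ within bounded distance of $\{t\,u_\alpha\}$, its image $\widetilde g(L)$ is again a leaf, so it lies within bounded distance of a line of direction $u_\alpha$; on the other hand, for $x=s\,u_\alpha+O(1)$ one has $\widetilde g(x)=sA u_\alpha+O(1)$, so $\widetilde g(L)$ also lies within bounded distance of the line of direction $Au_\alpha$. An unbounded set cannot track two non-parallel lines, whence $Au_\alpha\parallel u_\alpha$: the irrational direction $u_\alpha$ is an eigenvector of $A$, and $A$ is hyperbolic as explained above.

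The main obstacle is the structural step for a \emph{merely continuous} line field: establishing that a compact-leaf-free foliation of $\mathbb{T}^2$ has a well-defined irrational slope with quasi-straight lifted leaves, since Denjoy-type behaviour is possible in low regularity and one must check that a genuine global transversal with a well-defined first-return map exists. Two minor technical points are the orientability of $T\mathcal{W}$, handled by passing to the orientation double cover (which does not change whether the induced matrix is hyperbolic), and the lower semicontinuity of length needed to produce the minimizing compact leaf in the first step.
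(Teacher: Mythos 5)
Your overall strategy (kill compact leaves with the expansion, extract an irrational asymptotic direction, show it is $g_*$-invariant, conclude arithmetically) is the same as the paper's, and the first and third movements are fine. But there is a genuine gap in the concluding arithmetic step. You assert that ``an eigenvalue of an integer matrix with an irrational eigendirection cannot be rational.'' This is false: $A=\pm\mathrm{Id}$ has every direction, including every irrational one, as an eigendirection, with rational eigenvalue $\pm 1$. The correct statement is that if $A\in GL_2(\mathbb{Z})$ has an eigenvector of irrational slope with rational eigenvalue $\mu$, then $\mu\in\{\pm1\}$ and $A-\mu\,\mathrm{Id}$, being an integer matrix whose kernel contains an irrationally-sloped vector, must vanish, i.e.\ $A=\pm\mathrm{Id}$. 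So your argument only yields the dichotomy ``$g_*$ hyperbolic or $g_*=\pm\mathrm{Id}$,'' and the second alternative is precisely where the paper spends the bulk of its proof. Your tracking argument ($Au_\alpha\parallel u_\alpha$) is vacuous when $A=\pm\mathrm{Id}$, so nothing in your write-up excludes this case.

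The gap is essential, not cosmetic: you use the hypothesis $\|dg|_{T\mathcal{W}}\|>1$ only once, to rule out compact leaves, and ``no compact leaves $+$ $g$ preserves $\mathcal{W}$'' genuinely does not force hyperbolicity of $g_*$ (a translation of $\mathbb{T}^2$ preserves an irrational linear foliation and acts as the identity on $\pi_1$). The expansion must be used a second time. The paper does this as follows: if $g_*=\mathrm{Id}$ (after squaring to fix orientations), a lift is $\mathrm{Id}+\alpha$ with $\alpha$ bounded, so $\diam(\tilde g^{\,n}(X))\le\diam(X)+nK$ grows at most linearly; meanwhile the length of a leaf arc grows like $\lambda^n$. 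Hence some iterate of a leaf arc is very long with endpoints $\varepsilon$-close, and a Poincar\'e--Bendixson argument then produces a compact leaf, contradicting what was already established. You would need to add this (or an equivalent) argument to close the proof. A secondary, smaller issue, which you honestly flag yourself, is that the existence of a well-defined irrational asymptotic direction with quasi-straight lifted leaves for a merely $C^0$ compact-leaf-free foliation is invoked rather than proved; the paper is equally terse on this point, so I would not count it against you, but the $\pm\mathrm{Id}$ case must be dealt with.
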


\begin{proof}
By taking $g^2$ if necessary we  can suppose that $g$ preserves the orientation of $T\mathcal{W}$. Since $g$ preserves a foliation without compact leaves,  $g_*:\mathbb{Z}^2\rightarrow \mathbb{Z}^2$ (we identify $\pi_1(\mathbb{T}^2)$ with $\mathbb{Z}^2$) has an eingenspace of irrational slope. This implies that either $g_*$ is hyperbolic or $g_*=Id$. In the second case $g$ has a lift $\tilde g:\mathbb{R}^2\rightarrow \mathbb{R}^2$ such that $\tilde g=Id+\alpha$ where $\alpha$ is a periodic, and in particular bounded, function. As a consequence we obtain that there exists a constant $K>0$ such that given any subset of $\mathbb{R}^2$, $X$, $\diam(g^n(X))\leq \diam(X)+nK$. Let $\gamma$ be an arc contained in a leaf of $\mathcal{W}$. Then, the length of $\gamma$ grows exponentially while its diameter grows at most linearly. This implies that given a small $\varepsilon>0$ there exists an iterate of $\gamma$ that contains a curve of length arbitrarily large and with end points at distance less than $\varepsilon$. Using Poincar\'{e}-Bendixon we obtain a compact leaf. This  is a contradiction and then, $g_*$ is hyperbolic.
\end{proof}

\begin{proof}[Proof of Proposition \ref{cu.tori.periodic}]  Let $T$ be a $cu$-torus, and consider the sequence $f^{-n}(T)$. Since the family of all compact subsets of $M$, considered with the Hausdorff metric $d_{H}$, is compact, there is a subsequence $f^{-n_{k}}(T)$ converging to a compact set $K\subset M$. Therefore, for each $\eps>0$ there are arbitrarily large $N>>L>0$ such that $d_{H}(f^{-N}(T), f^{-L}(T))<\eps$. \par
 Since $T$ is transverse to the stable foliation, the union of all local stable leaves of $T$ forms a small tubular neighborhood of $T$, $U(T)$. Since stable leaves grow exponentially under $f^{-1}$, if $N>>L$ as above are large enough, then $f^{-L}(\overline{U(T)})\subset f^{-N}(U(T))$. This implies that 
 $f^{N-L}(\overline{U(T)})\subset U(T)$. 

The set $T_{0}=\cap_{k=0}^{\infty}f^{k(N-L)}(U(T))$ is a periodic $cu$-torus. Indeed, it is easy to see that $T_{0}$ is periodic and homeomorphic to a torus. On the other hand, for each point $x$ of $T_{0}$, its tangent space is limit of the tangent spaces of points $x_{k}$ in $f^{k(N-L)}(T)$, which are $cu$-tori. Hence $T_{x}T_{0}=E^{c}_{x}\oplus E^{u}_{x}$.
\end{proof}

\begin{proof}[Proof of Proposition \ref{su.torus.anosov}] Assume $f$ admits an $su$-torus, and consider the lamination $\Lambda$ of all $su$-tori of $f$. This is a compact lamination \cite{Hae62}. Therefore, there is a recurrent leaf., that is, there is a torus $T$ and an iterate $n$, such that $d_{C^{1}}(f^{n}(T),T)<\eps$ for small $\eps$. There exists a dipheotopy $i_{t}$ on $M$, taking $f^{n}(T)$ into $T$. Then $\phi=f^{n}\circ i_{1}$ fixes $T$ and $\phi|T$ is isotopic to an Anosov diffeomorphism.  
\end{proof}

\section{Weak foliations of partially hyperbolic diffeomorphisms}\label{taut}

In this section we prove Theorem \ref{theorem.taut}. For any partially hyperbolic diffeomorphism of a 3-manifold such that the invariant bundles are orientable,  Burago and Ivanov \cite{burago_ivanov} have proved  that there are (not necessarily invariant) foliations that ``almost" integrate $E^{c\sigma}=E^c\oplus E^\sigma, \,\,\, \sigma=s,\,u$.

\begin{theorem}[Key Lemma 2.2, \cite{burago_ivanov}]\label{burago.ivanov} Let $f$ be a partially hyperbolic diffeomorphism of a closed 3-manifold  and  let $E^*$ be orientable for $*=s,\, c, \, u$.
Then, for every $\varepsilon> 0$ there is a foliation $\mathcal{F}^{c\sigma}_\varepsilon$ such that  $T\mathcal{F}^{c\sigma}_\varepsilon$
 is a continuous bundle and the angles between $T\mathcal{F}^{c\sigma}_\varepsilon$ and $E^{c\sigma}$ are no greater than
$\varepsilon$, $\sigma=s,\, u$.
\end{theorem}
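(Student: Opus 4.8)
The plan is to exploit the fact that, by \emph{domination}, the plane field $E^{c\sigma}$ is normally attracting (for $\sigma=u$ under forward iteration, for $\sigma=s$ under $f^{-1}$) and to manufacture the leaves of $\cF^{c\sigma}_\vare$ dynamically, as limits of large iterated disks. I will treat $\sigma=u$; the case $\sigma=s$ is identical after replacing $f$ by $f^{-1}$, and orientability of $E^{s},E^{c},E^{u}$ is used throughout to co-orient the leaves and the cones below. First I record the attracting property. Writing a $2$-plane near $E^{cu}$ as the graph of a linear map $A\colon E^{cu}\to E^{s}$ and computing the action of $Df$ on such graphs, one finds that the graph of $A$ is carried to the graph of $A'=(Df|_{E^{s}})\circ A\circ(Df|_{E^{cu}})^{-1}$, so that $\|A'\|\le \|Df|_{E^{s}}\|\,\|Df^{-1}|_{E^{cu}}\|\,\|A\|$. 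Since $E^{cu}$ dominates $E^{s}$ (its slowest rate, the center rate, still beats the stable one), the factor $\|Df|_{E^{s}}\|\,\|Df^{-1}|_{E^{cu}}\|$ is smaller than $1$ after passing to an adapted metric or a power of $f$. Hence there is a continuous cone field $x\mapsto C(x)$ of $2$-planes about $E^{cu}_{x}$ that is strictly $Df$-invariant, $Df(C(x))\subset\inter C(fx)$, and uniformly contracted: $\angle\big(Df^{n}(P),E^{cu}_{f^{n}x}\big)\le K\la^{n}$ for some $K>0$, $\la\in(0,1)$ and every $P\in C(x)$.

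The naive idea would be to choose a smooth foliation $\cG_{0}$ with all tangent planes inside $C$ and push it forward: $f^{n}(\cG_{0})$ is again a foliation, its tangent planes are $Df^{n}$ of planes in $C$, hence converge uniformly to $E^{cu}$, so $f^{n}(\cG_{0})$ would serve as $\cF^{cu}_{\vare}$ for $n$ large. The obstruction is that integrability is a diffeomorphism invariant, so this requires an honest foliation tangent to the cone to exist in the first place, which is essentially what we are trying to prove and for which there is no soft reason. I would therefore build the leaves one at a time. Fix a smooth $2$-plane field $\eta$ with $\eta(x)\in C(x)$ for all $x$ (only a plane field, not assumed integrable) and, for each $y$, a small smooth disk $D_{y}\ni y$ tangent to $\eta$. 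For $x\in M$ and $n\in\NN$ set $S^{n}_{x}=f^{n}\big(D_{f^{-n}x}\big)$, an embedded disk through $x$ whose tangent planes lie in $C$ and satisfy $\angle(T S^{n}_{x},E^{cu})\le K\la^{n}\to0$. Representing these disks as graphs over $E^{cu}$ in exponential charts, cone invariance gives a uniform $C^{1}$ bound and uniform continuity of the tangent planes (they are squeezed against the H\"older bundle $E^{cu}$); Arzel\`a--Ascoli together with a diagonal argument over an exhaustion by compact pieces then yields, after passing to a subsequence, a complete $C^{1}$ surface $L_{x}\ni x$ everywhere tangent to $E^{cu}$.

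The collection $\{L_{x}\}_{x\in M}$ is a \emph{branching foliation} tangent to $E^{cu}$: through every point passes at least one complete leaf tangent to $E^{cu}$, and, because all leaves are tangent to the same continuous co-oriented plane field and arise as limits of embedded disks, two leaves that meet cannot cross transversally but can only merge (a transverse crossing of surfaces both tangent to $E^{cu}$ is ruled out by the cone estimate, exactly as topological crossing is excluded for integral surfaces of a plane field). The final and hardest step is to pass from this branching foliation to a genuine foliation $\cF^{cu}_{\vare}$ whose tangent planes are still within $\vare$ of $E^{cu}$. Here I would follow a de-branching argument: using the co-orientation and the uniform $C^{1}$ regularity of the leaves, one perturbs the family slightly along the normal direction $E^{s}$ so as to split the merged sheets into pairwise disjoint graphs, interpolating across the (codimension-one) branching locus so that the result is an honest foliation with continuous tangent bundle and tangent planes moved by an amount that can be made $\le\vare$. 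Controlling this perturbation so that it neither destroys the tangency estimate nor re-introduces crossings is the main technical obstacle; the dynamical input of cone contraction is precisely what guarantees there is room to carry it out, since all sheets are already pinned within an arbitrarily thin cone about $E^{cu}$.
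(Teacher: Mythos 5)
First, a point of reference: the paper does not prove this statement at all --- it is imported verbatim as Key Lemma 2.2 of Burago--Ivanov \cite{burago_ivanov} --- so your proposal has to be measured against their proof. Your overall architecture (first produce a \emph{branching foliation} genuinely tangent to $E^{cu}$, then ``de-branch'' it into a true foliation whose tangent planes are within $\varepsilon$ of $E^{cu}$) is indeed the architecture of \cite{burago_ivanov}. But the step on which everything rests in your write-up fails. Cone invariance does give the angle estimate $\angle(TS^{n}_{x},E^{cu})\le K\lambda^{n}$ for $S^{n}_{x}=f^{n}(D_{f^{-n}x})$; however, it controls \emph{directions}, not \emph{sizes}. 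Partial hyperbolicity only requires $\|Df|_{E^{s}}\|<\|Df|_{E^{c}}\|$, so the center direction may be uniformly contracted by $f$, say $\|Df|_{E^{c}}\|\equiv\mu<1$. In that case $f^{n}$ crushes the disk $D_{f^{-n}x}$ (of fixed radius $\delta_{0}$) to a ribbon of width roughly $\mu^{n}\delta_{0}$ in the center direction while stretching it along $E^{u}$: the inner radius of $S^{n}_{x}$ at $x$ tends to $0$, the domains of your graphs over $E^{cu}$ collapse to a segment in the $E^{u}$-direction, and the Arzel\`a--Ascoli limit is the strong unstable \emph{curve} through $x$, not a surface. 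The obvious repair --- start from disks of radius $\mu^{-n}\delta_{0}$ --- is circular, since a large disk whose tangent planes lie everywhere in the cone is essentially the object being constructed ($\eta$ is not integrable, so it only supplies disks of a fixed small size). That this gap is fatal and not technical can be seen from the fact that your argument nowhere uses $\dim M=3$: on the six-dimensional Borel--Smale nilmanifold example discussed in the Introduction \cite{sm,wilk} one can regroup the algebraic splitting (putting into $E^{c}$ two directions whose bracket is a stable direction) so that $E^{cu}$ is smooth, dominated, and non-involutive; Frobenius then forbids any integral surface of $E^{cu}$ through any point, yet your construction would apply verbatim.

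Beyond this, the two remaining steps are asserted rather than proved, and both are substantial. Non-crossing of limit leaves does \emph{not} follow from tangency to a common continuous plane field: failure of unique integrability means exactly that two integral surfaces through the same point may separate, i.e.\ cross topologically (this is the phenomenon exploited in \cite{HHU.nondinco}); in \cite{burago_ivanov} non-crossing is not deduced after the fact but is built into the construction, which uses the orientations and one-dimensionality of the invariant bundles to order candidate surfaces transverse to $E^{s}$ and select extremal ones --- this is precisely where the hypotheses special to dimension $3$ enter. Finally, the de-branching of a branching foliation into an honest foliation $\varepsilon$-close to it is a separate theorem of \cite{burago_ivanov} (their Section 7), proved by a delicate surgery along a branch locus that is in general a complicated closed set; calling it a perturbation for which ``the cone contraction guarantees there is room'' is not an argument. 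In short: you have reproduced the correct two-stage strategy in outline, but the first stage collapses for systems whose center contracts, and the other two key assertions are left unproved.
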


In this section we prove that these foliations, if the manifold is different from the ones listed in Theorem \ref{invariant.tori}, are taut. Recall that a codimension one foliation is {\em taut} if there exists an embedded $\mathbb{S}^1$ that intersects transversely, and nontrivially, every leaf of the foliation (see \cite{calegari}).\newline\par
Let ${\mathcal F}$ be a codimension-one foliation. A  {\em dead-end component} is an open submanifold
$N \varsubsetneq M$ which is a union of leaves of $\mathcal{F}$, such that there is
no properly immersed line transverse to $\mathcal{F}$. That is, there is no $\alpha:[a,b]\to M$ transverse to ${\mathcal F}$ such that 
$\alpha(a,b)\subset N$, $\alpha(a),\alpha(b)\in\partial N$. 

A  {\de Reeb
 component} is a solid torus whose interior is foliated by planes transverse
 to the core of the solid torus, such that each leaf limits on the
 boundary torus, which is also a leaf. Observe that the interior of a Reeb component is a dead-end component.
 Other examples of dead-end components are obtained by taking a Reeb foliation of the annulus, multiplying by the interval and gluing the two boundary annulus using a rotation that preserves the Reeb foliation.
Observe that in both cases the boundary of the dead-end component consists of tori. This is a general fact that is stated in the following lemma.

\begin{lemma}[Lemma 4.28, \cite{calegari}]\label{nodead} Let $M$ be a 3-dimensional orientable closed manifold. A foliation $\mathcal{F}$ of $M$ is taut if and only if it contains no dead-end components. If
$N$ is a dead-end component, then the restriction of $\mathcal{F}$ to $\overline N$ is transversely orientable and $\overline N\setminus N$ consists of a union of tori leaves of F. Moreover, boundary leaves of $N$ cannot be joined by an arc in $N$ transverse to $\mathcal{F}$.
\end{lemma}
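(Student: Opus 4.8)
The plan is to separate the equivalence from the structural statements, proving the easy implication by hand and importing the duality theory of codimension-one foliations for the hard one. First I would dispose of the implication that tautness forbids dead-end components, together with the final \emph{moreover} clause, since both are essentially formal. Assume $\mathcal F$ is taut and let $\gamma\subset M$ be an embedded circle transverse to $\mathcal F$ that meets every leaf. If $N\subsetneq M$ were a dead-end component, then $\gamma$ meets the boundary leaves of $N$ at points of $\overline N\setminus N$, so $\gamma\not\subset N$, while $\gamma$ also meets the leaves contained in $N$, so $\gamma\cap N\neq\emptyset$. Since $N$ is open, $\gamma\cap N$ is a nonempty proper open subset of $\gamma$, and any connected component of it is an open arc whose closure in $\gamma$ is a transverse arc with interior in $N$ and both endpoints on $\overline N\setminus N$. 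This is exactly the kind of arc forbidden by the definition of a dead-end component, a contradiction; the same observation shows directly that two boundary leaves can never be joined by a transverse arc inside $N$, which is the final assertion.

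For the converse I would argue by contraposition. The key input is the duality theory of Sullivan and of Goodman--Plante (see \cite{calegari}): for a codimension-one foliation on a closed orientable manifold, tautness is equivalent to the existence of a closed $2$-form positive on $T\mathcal F$, and by Hahn--Banach separation of the cone of foliation cycles from the exact currents, the obstruction to such a form is precisely a nonzero foliation cycle $Z$ that bounds. By Goodman--Plante the leaves meeting no closed transversal are exactly those in the supports of such bounding cycles, and these form a closed saturated set $K$. I would then take $N$ to be the open saturated region trapped by $K$ and show it is a dead-end component: a transverse arc crossing $N$ from boundary to boundary would, by compactness of $M$ and the trapping of the transverse flow by $K$, extend to a closed transversal meeting $K$, contradicting that the leaves of $K$ admit none. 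This contradicts the hypothesis of no dead-end and hence yields tautness.

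It remains to analyze the boundary. The leaves of $\overline N\setminus N$ lie in $\supp(Z)$, and I would first argue they are compact: nearby leaves inside $N$ can only accumulate on the boundary from the $N$-side without escaping (an escaping transverse arc would be a through-arc), so the accumulated frontier is a minimal set carrying a holonomy-invariant transverse structure; ruling out an exceptional minimal set (à la Sacksteder, using that a resilient leaf would meet a closed transversal) leaves a compact leaf. Transverse orientability of $\mathcal F|_{\overline N}$ then comes from the co-orientation pointing into $N$, which is consistent throughout $\overline N$ precisely because no loop can reverse it without producing a through-arc. Finally, each compact boundary leaf $S$ lies in the support of a foliation cycle that bounds, so the tangential Euler class pairs to zero with it, giving $\langle e(T\mathcal F),[S]\rangle=\chi(S)=0$; being two-sided by transverse orientability and sitting in the orientable manifold $M$, such a leaf is a torus. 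This reproduces the picture of Reeb components and their annulus-times-interval analogues.

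The main obstacle is the converse implication, and within it two technical points. The first is that the Sullivan and Goodman--Plante duality must be used in the merely continuous ($C^0$) setting relevant to the Burago--Ivanov foliations, where one relies on the topological formulations of these theorems. The second, and sharper, difficulty is the compactness of the boundary leaves: ruling out exceptional minimal sets is delicate in low regularity, so one must exploit the trapping geometry of the dead-end region itself rather than a smooth Sacksteder argument. By contrast, the forward implication, the Euler-class computation forcing $\chi(S)=0$, and the deduction that a two-sided zero-Euler-characteristic leaf in an orientable $3$-manifold is a torus are all routine.
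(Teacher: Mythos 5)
The paper does not actually prove this statement: it is quoted verbatim as Lemma 4.28 of Calegari's book \cite{calegari} and used as an imported result, so there is no in-paper argument to compare yours against; I can only assess your sketch on its own terms. The easy half is fine: with the paper's definition of a dead-end component, a closed transversal meeting every leaf restricts to a transverse arc crossing $N$ from frontier to frontier, giving the contradiction. (Note, though, that the ``moreover'' clause is immediate from the definition of dead-end component as stated in the paper, and deriving it \emph{under the assumption that $\mathcal F$ is taut} is vacuous, since taut foliations have no dead-end components at all.)

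The converse and the structure of $\partial N$ are where the content lies, and your sketch has three concrete gaps there. First, the dead-end component is never actually constructed: ``the open saturated region trapped by $K$'' is not defined, and the Sullivan/Hahn--Banach detour through foliation cycles does not produce it. The standard construction is purely topological: for a leaf $\lambda$ admitting no closed transversal, the sets $P_{+}(\lambda)$ and $P_{-}(\lambda)$ of points reachable from $\lambda$ by positively (resp.\ negatively) transverse arcs are disjoint open saturated sets (otherwise one closes up a transversal through $\lambda$), and the positive invariance of $P_{+}(\lambda)$ shows it is a dead-end component; no currents are needed. Second, compactness of the frontier leaves cannot be obtained from Sacksteder, which requires $C^{2}$ regularity that the foliations relevant here ($C^{0}$ with continuous tangent plane field) do not have --- you flag this obstruction yourself, but flagging it is not the same as avoiding it; the correct route is that a non-compact frontier leaf would accumulate on itself from the $N$-side and yield a transverse arc through $N$ joining frontier leaves, contradicting the dead-end property. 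Third, the Euler-class step is a non sequitur: a leaf lying in the \emph{support} of a bounding foliation cycle need not itself be null-homologous, so $\langle e(T\mathcal F),[S]\rangle=0$ does not follow as you claim. What does work is that $[\partial\overline N]=0$ in $H_{2}(\overline N)$, hence $\sum_{S}\chi(S)=\langle e(T\mathcal F),[\partial\overline N]\rangle=0$; since sphere leaves are excluded by Reeb stability, each $\chi(S)\le 0$, forcing every boundary leaf to be a torus. As written, the hard direction of your argument does not go through.
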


Observe that the last assertion implies that the boundary leaves of any dead-end component of a foliation have half-neighborhoods in $N$ with size uniformly bounded by below.\par

It is an obvious corollary of Lemma \ref{nodead} that foliations without compact leaves are taut. For instance, the weak stable and weak unstable foliations of Anosov flows of 3-dimensional manifolds are taut.

\begin{proof}[Proof of Theorem \ref{theorem.taut}]
Partial hyperbolicity implies that the forward iterates of $T\mathcal{F}$ converge to $E^{cu}$. Let $N$ be a dead-end of $\mathcal{F}$ and let $T$ be a boundary component of $N$. Hence $T$ is a torus, transverse to $E^{s}$. All iterates $f^{-n}(T)$ are tori transverse to $E^{s}$. The proof follows now exactly as in Proposition \ref{cu.tori.periodic}.

\end{proof}

\section{Proof of Theorem \ref{uniqueint}}\label{s_teo2}

Let us suppose that there exists an invariant foliation $\mathcal{F}^{cu}$ tangent to $E^{cu}$, and that $\mathcal{F}^{cu}$ has a compact leaf, which must be a torus. (Then $M$ must be one of the manifolds listed in Theorem \ref{invariant.tori}).
By Proposition \ref{cu.tori.periodic}, there exists a periodic $cu$-torus $T$. By taking an iterate, we can assume that is fixed. The dynamics on $T$ is isotopic to hyperbolic, due to Proposition \ref{cu.tori.anosov}. In \cite{anosov_tori} it is shown that cutting $M$ along $T$ we obtain a manifold with boundary, that is diffeomorphic to $\mathbb{T}^2\times [0,1]$. Moreover, $f$ induces a diffeomorphism $g$ of  $\mathbb{T}^2\times [0,1]$ isotopic to $A\times id$ where $A$ is a hyperbolic automorphism of $\mathbb{T}^2$ and $id$ is the identity map of the interval $[0,1]$. Then, \cite{fr} implies that there exists a semiconjugacy $h:\mathbb{T}^2\times [0,1] \rightarrow \mathbb{T}^2$, between $g $ and $A$, homotopic to the projection $p:\mathbb{T}^2\times [0,1] \rightarrow \mathbb{T}^2$. Observe also that $h(\mathbb{T}^2\times \{0\})=\mathbb{T}^2$.\newline\par

The torus $\mathbb{T}^2\times \{0\}$ is foliated by a foliation $\mathcal{S}^u$ by lines that are integral curves of the strong unstable foliation. Call $\tilde{\mathcal{S}}^u$ the lift of $\mathcal{S}^u$ to $\mathbb{R}^2$, the universal cover of $\mathbb{T}^2\times \{0\}$. It is not difficult to see that if $x, \,\, y$ are in the same leaf $S^u $ of $\tilde{\mathcal{S}}^u$ the fact that $\dist_u(x,y)$ goes to infinity implies that $dist(x,y)$ goes to infinity ($\dist_u(x,y)$ is the length of the arc of $S^u $ joining $x$ and $y$). Let $\bar h$ be a lift of $h|_{\mathbb{T}^2\times \{0\}}$ to $\mathbb{R}^2$. Since the diameters of the sets $\bar h^{-1}(y)$ are uniformly bounded and $f(\bar h^{-1}(y))=\bar h^{-1}(A y)$, we obtain that the map $\bar h$ is injective when restricted to strong unstable manifolds. Recall that the image of an unstable manifold by $h$ is an unstable manifold of $A$. \newline\par

Let us show that the image of a center curve by $h$ is contained in a stable manifold of $A$. For this, it is enough to show that the length of the forward iterates of the curve are bounded. Let $\gamma$ be a (small) center curve and let $\delta $ be such that $W^u_\delta(x)\cap W^u_\delta(y)=\emptyset$ for all $x\ne y \in \gamma$. Let $W^u_\delta(\gamma)=\bigcup\{W^u_\delta(x);\, x\in \gamma\}$. Since $f$ expands the unstable bundle we have that $W^u_\delta(f^n(\gamma))\subset f^n(W^u_\delta(\gamma))$.
But since the angle between the center bundle and the unstable bundle is bounded by below, we have that the area of $W^u_\delta(f^n(\gamma))$ goes to infinity as the length of $f^n(\gamma)$ goes to infinity which is a contradiction. Moreover, this implies that $E^c$ is uniquely integrable for $f|_T$. If this were not the case there would exist, in the universal cover,  two center curves $\gamma_1,\,\,\gamma_2$ beginning at the same point $x $ and cutting a nearby unstable manifold at two different points $y,\, z$. By forward-iterating this ``triangle" we would obtain that the distance between $f^n(y)$ and $f^n(z)$ goes to infinity. Then, either the length of $\gamma_1 $ or the length of $\gamma_2$ goes to infinity, which contradicts what we have proved before. Summarizing, $f|_T$ has two invariant foliation by lines, one tangent to the unstable bundle and the other one tangent to the center bundle; the semiconjugacy sends the unstable leaves to unstable lines of $A$ and the center leaves to stable lines of $A$. Again the distance between two points in the same center leaf in the universal cover of $T$ goes to infinity as the length of the center curve goes to infinity. It is not difficult to see that this implies that, for any $y\in \mathbb{T}^2$, $h^{-1}(y)\cap T$ is a connected arc contained in a center leaf.\newline\par

Let $p\in T$ be a periodic point. We may assume that $J_p=h^{-1}(h(p))\cap T$ is a very small arc. This is easy to obtain since there are infinitely many periodic points in different center curves. Let $U\subset \mathbb{T}^2\times I$ be a small neighborhood of $J_p$, and let $y\in h^{-1}(h(p))\cap U$. We will prove that $y\in W^{cs}_{loc}(p)$. On one hand, if $y\notin W^{cs}_{loc}(p)$ then $z=W^s_{loc}(y)\cap T$ is not in the local center manifold of $p$. On the other hand, $h(z)\in h(W^s_{loc}(y))\subset W^s_{loc}(h(p))$ is in the local stable manifold of $h(p)$ for $A$, which contradicts the fact that it is not in the local center manifold (in $T$) of $p$. \par

Let us focus on $W^{cs}_{loc}(p)$. The intersection of the center unstable foliation $\mathcal{F}^{cu} $ with $W^{cs}_{loc}(p)$ foliates $W^{cs}_{loc}(p)$ by center arcs.
By continuity, any of these center arcs, close enough to $T$, has a point of $h^{-1}(h(p))$. Certainly, the same is true for $\bar p$, a lift of $p$ to universal cover. We choose $x_1,\dots, x_N\in W^{cs}_{loc}(\bar p)$ points that are in $\bar h^{-1}(\bar h(\bar p))$ and in different center curves. Let $C>0$ be such that $\diam (\bar h^{-1}(y))<C$ for every $y\in T$ and $\varepsilon>0$ so small that if two points are at a distance less than  $\varepsilon$, then they are in a trivializing chart of $\mathcal{F}^{cu}$. Now, we choose $N$ in such a way that if $N$ points are contained in a set of diameter $C$  then, at least two of them are a distance less than $\varepsilon$. Since $\bar f^n(\bar h^{-1}(\bar h(\bar p)))=\bar h^{-1}(\bar h(\bar f^n(\bar p)))$ we have that $\diam \{\bar f^n(x_1),\dots ,\bar f^n (x_n)\}<C, \,\, \forall n\in \mathbb{Z}$. Then, there exists a subsequence $n_k\rightarrow -\infty$ and two different points $x_i$ and $x_j$ such that $\dist(\bar f^{n_k}(x_i),\bar f^{n_k}(x_j))<\varepsilon, \,\, \forall k>0$.  Now, take an arc $\alpha$ joining $x_i$ and $x_j$ that consists of two sub-arcs, $\alpha_1$ beginning at $x_i $ and tangent to the center bundle and $\alpha_2$ ending at $x_j$ and tangent to stable one (see Figure \ref{figure3}).

\begin{figure}
\begin{tikzpicture}
 \draw (-3,-3) -- (5,-3);
 \draw (-3,-2.75) -- (5,-2.75);
 \draw (-3,-2.5) -- (5,-2.5);
 \draw (-3,-2.25) -- (5,-2.25);
 \draw (-3,-2) -- (5,-2);
 \draw (-3,-1.75) -- (5,-1.75);
 \draw (-3,-1.5) -- (5,-1.5);
 \draw (-3,-1.25) -- (5,-1.25);
 \draw (-3,-1) -- (5,-1);
 \draw (-3,-0.75) -- (5,-0.75);
 \draw (-3,-0.5) -- (5,-0.5);
 \draw (-3,-0.25) -- (5,-0.25);
 \draw (-3,0) -- (5,0);
 \draw (-3,-3.25) -- (5,-3.25);
 \draw[line width=2pt] (-1,-3.25) -- (1,-3.25);
  \draw[line width=2pt] (0,-2.5) -- (4,-2.5)-- (4,-0.5);

 \fill (0,-2.5) circle (3pt);
 \fill (4,-0.5) circle (3pt);
 \node at (0,-2.80) {$x_i$};
  \node at (2,-2.15) {$\alpha_1\subset W^c(x_i)$};
\node at (4,-0.2) {$x_j$};

\node at (5.5,-1.2) {$\alpha_2\subset W^s(x_j)$};
 \node at (0,-3.75) {$J_{\bar p}$};
\end{tikzpicture}
\caption{\label{figure3}
 $W^{cs}(\bar p)$}
\end{figure}
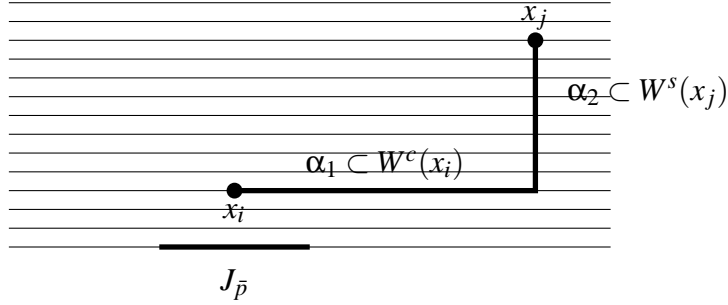

For $k$ large enough we obtain that $\bar f^{n_k}(\alpha_2)$ is a very long stable curve, $\bar f^{n_k}(\alpha_1)$ is contained in a leaf of $\tilde {\mathcal{F}}^{cu}$ and the extremes of $\bar f^{n_k}(\alpha)$ are a distance less than $\varepsilon$ (see Figure \ref{figure_reeb}).

 \begin{figure}[h]
 \vspace{-7cm}
\begin{tikzpicture}


\draw (1.3,-1) .. controls +(-110:7cm) and +(100:10cm) .. (3,0);
\node at (0,1) {$\bar f^n(\alpha_2)$};
\draw (3,0) -- (1.7,-0.1);
\node at (2.2,0.3) {$\bar f^n(\alpha_1)$};
\fill (1.3,-1) circle (2pt);
\node at (2,-1) {$\bar f^n(x_2)$};
\fill (1.7,-0.1) circle (2pt);
\node at (1,-0.1) {$\bar f^n(x_1)$};
\draw[thick,dotted] (0.8,-0.7) -- (4.3,-0.7) -- (5.8,1.5) -- (2.3,1.5) -- cycle;
\node at (4.2,1) {$\bar F^{cu}(\bar f^n(x_1))$};
\end{tikzpicture}
\vspace{-4.5cm}\caption{\label{figure_reeb}
 $\bar f^n(\alpha)$}
\end{figure}
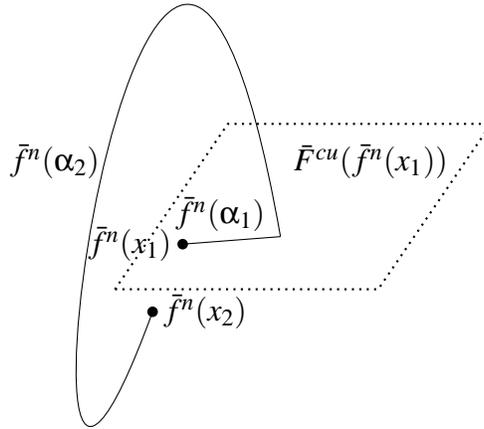

 Standard arguments of foliation theory imply that there is a closed curve transverse to $\tilde {\mathcal{F}}^{cu}$ which implies the existence of a Reeb component which is, as it is well known (see for instance \cite{burago_ivanov,rru_nilman}), impossible. This finishes the proof of Theorem \ref{uniqueint}.


\end{document}